\numberwithin{equation}{section}
\newtheorem{thm}{Theorem}[section]
\newtheorem{lem}[thm]{Lemma}
\newtheorem{cor}[thm]{Corollary}
\newtheorem{prop}[thm]{Proposition}
\newtheorem{defin}{Definition}[section]
\newcommand\eps{\epsilon}
\def\om{{\omega}}
\def\Om{{\Omega}}
\def\dsum{\displaystyle\sum}
\def\dlim{\displaystyle\lim}
\def\dint{\displaystyle\int}
\newcommand\Exp{\operatorname{Exp}}
\begin{document}

\title{Fick law and sticky Brownian motions}

\author{Thu Dang Thien Nguyen}

\institute{Thu Dang Thien Nguyen \at
             Gran Sasso Science Institute, Viale Francesco Crispi, 7, L'Aquila  67100, Italy\\
			 Department of Mathematics, University of Quynhon, Quy Nhon, Vietnam\\
             \email{thu.nguyen@gssi.it}           
}

\date{Received: \today / Accepted: date}

\maketitle

\begin{abstract}
{We consider an interacting particle system in the interval $[1,N]$ with reservoirs at the boundaries. While the dynamics in the channel is the simple symmetric exclusion process, the reservoirs are also particle systems which interact with the given system by exchanging particles. In this paper we study the case where the size of each reservoir is the same as the size of the channel . We will prove that the hydrodynamic limit equation is the heat equation with boundary conditions which relate first and second spatial derivatives at the boundaries for which we will prove the existence and uniqueness of weak solutions. The propagation of chaos property can also be derived.
}
\end{abstract}
\keywords{Hydrodynamic limits \and Free boundary problem \and Sticky random walk \and Propagation of chaos}
\maketitle

\section{Introduction}
The Fick law states that the mass flux is proportional to minus the gradient of the mass density, it is the analogue of the Fourier law for heat conduction.
Many efforts have been devoted to its rigorous derivation  from ``microscopic'' particle systems.  The first paper in this direction is \cite{GKMP} where the Fick law has been derived in the hydrodynamic limit from  the simple symmetric exclusion process (SSEP) complemented by birth-death processes at the extremes.
Such processes simulate density reservoirs which  force the density at the boundaries to given values, say $\rho_{\pm}$.  Extensions to more general systems have been obtained afterwards where however the action of the reservoirs has always been represented by birth-death processes. The case where the boundary densities $\rho_{\pm}$ might be time-dependent has been studied more recently in the context of the SSEP. In such papers time dependence
arises when either the  birth-death processes are suitably ``weak'', \cite{DPTV1},
\cite{DPTV2},\cite{DPTV3},\cite{DPTV4}, or when
the rates are directly time-dependent as in \cite{DO} where
the so called adiabatic limit has been derived.

To model the action of reservoirs as birth-death processes is clearly an approximation. Physically reservoirs are also particle systems which interact with our given system $\mathcal S$ with which they exchange particles: the idea behind is that (i) reservoirs are ``very large'' so that their density does not have significant variations, (ii) the reservoirs have a ``very fast'' approach to equilibrium so that their interaction with $\mathcal S$ looks always the same.
In this and in a successive paper we plan to investigate these issues when the system $\mathcal S$ evolves as in the SSEP and the interaction with the reservoirs is of mean field type, this assumption is to implement (ii).  We will see that by varying the size of the reservoirs we obtain different regimes. In this paper we study the case where the size of each reservoir is the same as the size of $\mathcal S$.  We will prove that the limit hydrodynamic equation is the heat equation with boundary conditions which relate first and second spatial derivatives at the boundaries for which we will prove existence and uniqueness of weak solutions.  The result for weak solutions has been considered by Stroock and Williams, \cite{SW}, and by Peskir, \cite{P}, for the system in the half line. The solution of the limit equation has a probabilistic representation in terms of sticky Brownian motions and our result proves convergence of sticky random walks to sticky  Brownian motions extending the proof of \cite{A} which referred to motion on the half line.


\vskip2cm

\section{Model and main results}

\subsection{The model}

We will study a jump Markov process with state space $\Om=\{0,1\}^{\Lambda_N} \times \{0,..,M\} \times \{0,..,M\}$, where $\Lambda_N=[1,N]\cap \mathbb{N}$, $M$ and $N$ are positive integers, we will eventually set $M=N$.  Thus an element $\om$ of $\Om$ is a sequence $\eta(x)$, $x=1,..,N$, complemented by two integers $n_-$ and $n_+$ both in $[0,M]$: $\eta(x)\in \{0,1\}$,
$\eta(x)=1$ means that there is a particle at $x$ and $\eta(x)=0$ that $x$ is empty. If $n_+=k$ we will then say that there are $k$ particles in $\mathcal S_+$, likewise if  $n_-=k$ then there are $k$ particles in $\mathcal S_-$, such a terminology will become clear in a while.

The generator $\mathbf{L}$ of the process is defined as
 \begin{equation}
 \label{2.1}
 \mathbf{L}f(\om) =\frac 12 \sum_{x=1}^{N-1}[ f(\om^{x,x+1})-f(\om)]
 + c_N(\om)[ f(\om^N)-f(\om)] +  c_{1}(\om)[ f(\om^{1})-f(\om)]
 \end{equation}
where $\om^{x,x+1}$ is obtained from $\om$ by exchanging $\eta(x)$ and $\eta(x+1)$; 
 \begin{equation}
 \label{2.2}
 c_N(\om) = \frac 12\Big( 1-\frac{n_+}{M}\Big),\quad \text{if $\eta(N)=1$}
 \end{equation}
and $\om^N$ has $\eta^N(N)=0$ and $n_+^N= n_++1$: we then say that a particle has jumped from $N$ to $\mathcal S_+$; 
 \begin{equation}
 \label{2.3}
 c_N(\om) =  \frac 12\,\frac{n_+}{M},\quad \text{if  $\eta(N)=0$}
 \end{equation}
and $\om^N$ has $\eta^N(N)=1$ and $n_+^N= n_+-1$: we then say that a particle has jumped from $\mathcal S_+$ to $N$. $c_1(\om)$ is defined analogously. Notice that if $n_+=M$ then $c_N(\om)=0$ in \eqref{2.2} so that we always have $n_+ \le M$, likewise $n_+\ge 0$ (same for $n_-$).

The SSEP with reservoirs mentioned in the introduction is the above process where $n_{\pm}$ do not change, i.e.\ we redefine $\om^N$ and $\om^1$ by setting $n_+^N= n_+$ and $n_-^N=n_-$.  This means that   particles jump from $N$ to $\mathcal S_+$ at constant rate  \eqref{2.2} and a new particle enters at $N$ with rate  \eqref{2.3} (analogous processes occurring at $1$).

The above process can be realized in a larger space:  $\Om^*=\{0,1\}^{\Lambda_N} \times \{0,1\}^{\mathcal S_-} \times \{0,1\}^{\mathcal S_+}$: the elements $\om$ of $\Om^*$ are sequences $\eta(x)$, $x \in \Lambda_N \cup \mathcal S_-\cup \mathcal S_+$, and the generator is
 \begin{align}
 \mathbf{L}^*f(\om) =\frac 12 \sum_{x=1}^{N-1}[ f(\om^{x,x+1})-f(\om)]
 + &\frac 1{2M} \sum_{z \in \mathcal S_-}[ f(\om^{1,z})-f(\om)]\notag\\
 + & \frac 1{2M} \sum_{z \in \mathcal S_+}[ f(\om^{N,z})-f(\om)] \label{2.4}
 \end{align}
This is the usual stirring process but with rates slowed down for exchanges between $N$ and sites in $\mathcal S_+$ and between 1 and sites in $\mathcal S_-$.

One can readily see that the marginal over the variables $\eta(x), x \in \Lambda_N$, and
   $$
   n_{\pm} :=\sum_{z\in \mathcal S_{\pm}} \eta(z)
   $$
has the law of the process with generator $\mathbf{L}$.  This remains true also if we allow autonomus evolutions (preserving the number of particles) in  $\mathcal S_-$ and $\mathcal S_+$ due to the mean field nature of the interaction between $N$ and $\mathcal S_+$ and $1$ and $\mathcal S_-$.  As discussed later for large $M$ and ``hydrodynamic times'' (i.e. which scale as $N^2$) the systems in $\mathcal S_-$ and $\mathcal S_+$ act as reservoirs for the system in $\Lambda_N$; however for times longer
than the hydrodynamical ones the densities in  $\mathcal S_-$ and $\mathcal S_+$ change till equilibration.

\vskip.5cm

\subsection{The hydrodynamic equations}

In the sequel we  identify $\eps=1/N$. Denote $\bar{\Lambda}_N:=[0,N+1]\cap\mathbb{N}$. We write for $x\in\bar{\Lambda}_N$,
$$ \rho^{\eps}(x,t) = \mathbf{E}^\eps\Big[ \eta(x,t)\Big],$$
where $\eta(0,t):=\frac{n_-(t)}{M}, \,\eta(N+1,t):=\frac{n_+(t)}{M}$ and $\mathbf{E}^\eps$ stands for the expectation with respect to the process with generator \eqref{2.1}, the initial distribution will be specified later. Sometimes we use $\rho^{\eps}_0(x)$ instead of $\rho^{\eps}(x,0)$.

We denote $\rho^{\eps}_-(t):=\rho^{\eps}(0,t),\quad  \rho^{\eps}_+(t):=\rho^{\eps}(N+1,t)$.

Then for $x\in \Lambda_N$, we have
\begin{equation} \label{2.6}
 \frac{d}{dt}\rho^{\eps}(x,t) =  \frac 12 \Big( \rho^{\eps}(x-1,t)+\rho^{\eps}(x+1,t)
 -2\rho^{\eps}(x,t)\Big).
 \end{equation}

 We also have  
\begin{equation}
 \label{2.7}
 \frac{d}{dt}\rho^{\eps}_-(t) =  \frac 1{2M} \Big( \rho^{\eps}(1,t) -\rho^{\eps}_-(t)\Big), \quad \frac{d}{dt}\rho^{\eps}_+(t) =  \frac 1{2M} \Big( \rho^{\eps}(N,t)-\rho^{\eps}_+(t)\Big)
 \end{equation}

Unless otherwise stated we fix hereafter $M=N$ and study the hydrodynamic limit
of $\rho^{\eps}(x,t)$ and $\rho^{\eps}_{\pm}(t)$ scaling space $x\to r$, $r=\eps x$ and time
$t \to \tau = \eps^2 t$.

\medskip
{\bf  The initial datum.}  Let $u_0 \in C(0,1)$ with values in $[0,1]$ and $v_{0,\pm}\in [0,1]$. Suppose that
\begin{equation}\label{e}
\rho^{\eps}_{\pm}(0) =  v_{0,\pm},\quad \rho^{\eps}(x,0) = u_0(\eps x), \forall x\in\Lambda_N.
 \end{equation}

In Section \ref{PrPDE} we will prove:

\medskip
\begin{thm}\label{PDE}
Let $N=M$ and $\rho^{\eps}_{\pm}(0) $ and
$\rho^{\eps}(x,0) $ as in \eqref{e}, then for any $r \in [0,1]$ and $t>0$,
$\rho^\eps([Nr],N^2t)$ and  $\rho^\eps_{\pm}(N^2t)$ have limits $u(r,t)$ and respectively $v_{\pm}(t)$.  $u(r,t)$ and $v_{\pm}(t)$ are the unique solutions of the ``free boundary problem'' stated below.

\end{thm}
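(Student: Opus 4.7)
The approach is to exploit the linearity of the discrete system \eqref{2.6}--\eqref{2.7} via duality with a sticky random walk. Since the ODE system is closed and linear in $(\rho^\eps(x,t))_{x=0}^{N+1}$, one has $\rho^\eps(x,t)=\mathbf{E}_x[\tilde u_0(X_t)]$, where $X_t$ is the Markov chain on $\bar\Lambda_N$ which performs the standard symmetric random walk at rate $1/2$ inside $\Lambda_N$ and, at the two extra sites $0$ and $N+1$, jumps to $1$ (resp.\ $N$) at the slow rate $1/(2M)=\eps/2$; the initial condition $\tilde u_0$ equals $u_0(\eps x)$ on $\Lambda_N$ and $v_{0,\pm}$ at $0$, $N+1$. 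The quantities $\rho^\eps_{\pm}(t)$ are obtained by starting $X_t$ from $0$ or $N+1$. A short computation with the stationary measure (which assigns weight $1/3$ each to $\{0\}$, $\{N+1\}$ and to the bulk) already reveals that $X_t$ spends an asymptotically positive fraction of its time at each boundary site, the quantitative fingerprint of stickiness in the limit.

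The core step is to show that, under the diffusive rescaling $X^\eps_\tau := X_{\tau/\eps^2}/N$, the chain converges weakly in $D([0,T],[0,1])$ to the sticky Brownian motion $B$ on $[0,1]$ with positive stickiness at both endpoints. Tightness follows from the diffusive scaling in the interior together with the boundedness of the boundary rates, so the real work lies in identifying the limit. I would proceed via the martingale problem: for smooth $\vf$ satisfying $\partial_r^2\vf(0)=\partial_r\vf(0)$ and the analogous relation at $r=1$, the discrete generator applied to $\vf$ converges uniformly to $\tfrac12\partial_r^2\vf$, so that $\vf(X^\eps_\tau)-\int_0^\tau\tfrac12\partial_r^2\vf(X^\eps_s)\,ds$ is asymptotically a martingale; these boundary relations characterise sticky BM on $[0,1]$ and extend the half-line argument of \cite{A}. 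Bounded convergence then gives $\rho^\eps([Nr],N^2\tau)\to u(r,\tau):=\mathbf{E}_r[\bar u_0(B_\tau)]$, $\rho^\eps_{-}(N^2\tau)\to v_-(\tau):=\mathbf{E}_0[\bar u_0(B_\tau)]$ and $\rho^\eps_{+}(N^2\tau)\to v_+(\tau):=\mathbf{E}_1[\bar u_0(B_\tau)]$, for any continuous extension $\bar u_0$ of $u_0$ with $\bar u_0(0)=v_{0,-}$, $\bar u_0(1)=v_{0,+}$.

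This probabilistic representation makes $(u,v_{\pm})$ a weak solution of the free boundary problem automatically: the interior heat equation comes from bulk Brownian motion, the matching $u(0,\tau)=v_-(\tau)$, $u(1,\tau)=v_+(\tau)$ from path continuity of $B$, and the boundary ODEs $\partial_\tau v_\pm = \pm\tfrac12\partial_r u|_{r=0,1}$ from the sticky boundary generator; combined with the interior heat equation these give the advertised relation between first and second spatial derivatives at the boundary. Uniqueness of weak solutions I would obtain by a standard energy estimate: for the difference $w$ of two solutions with zero initial data,
$$
\tfrac{d}{d\tau}\Big(\int_0^1 w(r,\tau)^2\,dr + w_-(\tau)^2 + w_+(\tau)^2\Big) = -\int_0^1(\partial_r w(r,\tau))^2\,dr \le 0,
$$
the boundary contributions in the integration by parts being cancelled exactly by the boundary ODEs, which forces $w\equiv 0$.

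The main obstacle is the identification of the sticky limit: proving that the boundary sojourns of $X^\eps$ converge to the local-time functional characterising stickiness is nontrivial, and doing so simultaneously at the two endpoints, while constructing a sufficiently rich class of boundary-adapted test functions for the martingale problem, is the technical heart of the argument.
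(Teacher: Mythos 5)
Your strategy---duality with the sticky walk, then process-level convergence to sticky Brownian motion via a martingale problem with domain conditions $\vf''(0)=\vf'(0)$, $\vf''(1)=-\vf'(1)$, then a probabilistic representation of the limit---is a genuinely different route from the paper's, which never proves convergence of the process itself. But there is a genuine gap at the step ``bounded convergence then gives\dots''. At any fixed $\tau>0$ the law of the sticky Brownian motion $B_\tau$ has atoms at $0$ and $1$ (that is precisely what stickiness means), and these are exactly the discontinuity points of the initial datum: the theorem treats $u_0$ and $v_{0,\pm}$ as \emph{independent} data, so in general $v_{0,-}\neq\lim_{r\downarrow 0}u_0(r)$, your ``continuous extension $\bar u_0$ of $u_0$ with $\bar u_0(0)=v_{0,-}$'' does not exist, and replacing $u_0$ near the endpoints changes the answer. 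Weak convergence of $X^\eps_\tau$ therefore does not give convergence of $E_x[\tilde u_0(X^\eps_\tau)]$: Portmanteau yields only $\limsup_\eps P(X^\eps_\tau=0)\le P_r(B_\tau=0)$, since $\{0\}$ is closed but not open, and a priori a nonvanishing amount of mass can sit at microscopic sites $x\ge 1$ near the boundary (where $\tilde u_0\approx u_0(0^+)$) instead of at the sticky site (where it equals $v_{0,-}$). To close the argument you must prove convergence of the boundary occupation probabilities $P_x\big(X(N^2\tau)\in\{0,N+1\}\big)$ to the atom masses of the sticky limit---exactly the local-time statement you defer to your final paragraph; until it is supplied, neither $\rho^\eps_\pm(N^2t)\to v_\pm(t)$ nor the representation of $u$ is established, and this is the heart of Theorem \ref{PDE}. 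The paper sidesteps the issue entirely: Corollary \ref{stoptime} decomposes at the first hitting time of $\{0,N+1\}$, so the limit involves only hitting distributions of the \emph{simple} random walk (no stickiness needed) together with the unknown boundary traces $\rho^\eps_\pm$; compactness of the traces comes from the H\"older-$\frac12$ bound of Proposition \ref{uniconv}, and the limits $v_\pm$ are pinned down through \eqref{bdry} and the Volterra system \eqref{first}--\eqref{second}.

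A secondary overstatement: the boundary ODEs do not hold ``automatically'' from the representation. They presuppose $v_\pm\in C^1(0,\infty)$ and $u_r$ continuous up to the boundary, and this regularity is real work---the paper first proves only the weak (integrated) identity of Proposition \ref{3.8} and then devotes Propositions \ref{Vol1} and \ref{C1} (Miller--Feldstein theory for the weakly singular kernel built from $\Theta$) to upgrading \eqref{2.10} to strong form. On the positive side, your energy identity for uniqueness is correct and, to my mind, cleaner than the paper's Gronwall argument on the homogeneous Volterra system: the boundary terms $w_+w_r(1,t)-w_-w_r(0,t)$ produced by integration by parts cancel exactly against $2w_-w_-'+2w_+w_+'$ by the boundary ODEs, leaving $-\int_0^1 w_r^2\,dr\le 0$; one should only integrate from $t_0>0$ and let $t_0\downarrow 0$ using continuity of the solutions at $t=0$, since derivatives may blow up there. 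So the skeleton is viable and in places more conceptual than the paper's, but as written the convergence claim rests on the unproved atom-mass (occupation-time) convergence.
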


\medskip
{\bf  The free boundary problem.}  
Find $u(r,t)$ and $v_{\pm}(t)$ with $u(r,t) \in C^{2,1}([0,1]\times (0,\infty))$ and $v_{\pm}(t)\in C^1(0,\infty)$ and continuous at $t=0$ so that
\begin{equation}
 \label{2.9}
 u_t =  \frac 1{2} u_{rr},  \quad u(1,t)=v_+(t),  u(0,t)=v_-(t),\quad \lim_{t\downarrow 0}u(r,t) =u_0(r)
 \end{equation}
with $v_{\pm}(t)$ such that $\dlim_{t\to 0}v_{\pm}(t) = v_{0,\pm}$ and 
\begin{equation}\label{2.10}
\begin{cases}
v_-(t)&=v_{0,-}+\dlim_{l\to 0}\dlim_{t_0\to 0}\dint_{t_0}^t \dfrac{1}{2}u_r(l,s)\,ds\\
v_+(t)&=v_{0,+}-\dlim_{l\to 1}\dlim_{t_0\to 0}\dint_{t_0}^t \dfrac{1}{2}u_r(l,s)\,ds.
\end{cases}
\end{equation}

For any given $v_{\pm}(t)$ (with the above regularity properties) the equation \eqref{2.9} is the linear heat equation with boundary values $v_{\pm}(t)$ and initial datum $u_0$. It has then a unique solution $u(r,t)$ with the desired regularity properties.  We call \eqref{2.9}--\eqref{2.10} a free boundary problem because the boundary data $v_{\pm}(t)$ are themselves unknown and must be determined so that \eqref{2.10} holds.

The above free boundary problem has been introduced by Strook and Williams, \cite{SW}, in the case where $[0,1]$ is replaced by the semi-infinite domain $[0,\infty)$, see also \cite{P}.  
\vskip.5cm

\subsection{Propagation of chaos}
Denote by $\mathbf{E}^{*,\eps}$ the expectation with respect to the process with generator \eqref{2.4}. Let $u_0 \in C(0,1)$ with values in $[0,1]$ and $n_{0,\pm}$ be fixed. Suppose that at the initial time, the particles in each reservoir are uniformly distributed and
\begin{equation}\label{2.8}
n_{\pm}(0) :=\sum_{z\in \mathcal S_{\pm}} \eta(z,0) =  n_{0,\pm},\quad \mathbf{E}^{*,\eps}[\eta(x,0)] = u_0(\eps x), \forall x\in\Lambda_N.
 \end{equation}
\begin{thm}\label{POC}
Suppose that $u_0\in C^1(0,1)$. For any distinct points $x_1, x_2$ in $\Lambda_N$ and any $t>0$,
\begin{equation*}
\dlim_{\eps\to 0} \Big|\mathbf{E}^{*,\eps}\big[\eta(x_1, \eps^{-2}t)\,\eta(x_2, \eps^{-2}t)\big]-\mathbf{E}^{*,\eps}[\eta(x_1,\eps^{-2}t)]\,\mathbf{E}^{*,\eps}[\eta(x_2,\eps^{-2}t)]\Big|=0.
\end{equation*}
\end{thm}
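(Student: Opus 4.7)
The plan is to exploit the stirring nature of $\mathbf L^*$ to derive a closed equation for the truncated two-point function
\[
\psi^\eps(x_1,x_2;t):=\mathbf E^{*,\eps}[\eta(x_1,t)\eta(x_2,t)]-\mathbf E^{*,\eps}[\eta(x_1,t)]\,\mathbf E^{*,\eps}[\eta(x_2,t)],
\]
and to estimate it via Duhamel's formula. For $x_1\ne x_2$, a direct computation of $\mathbf L^*(\eta(x_1)\eta(x_2))$ shows that $F^\eps:=\mathbf E^{*,\eps}[\eta(x_1,t)\eta(x_2,t)]$ evolves under the generator $\widetilde{\mathcal L}$ of two exclusion walks on $\Lambda_N\cup\mathcal S_-\cup\mathcal S_+$ (the edge joining the two particles, if any, produces no change). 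Since $\rho^\eps(x,t):=\mathbf E^{*,\eps}[\eta(x,t)]$ obeys the single-walker heat equation on the same graph, the product $\rho^\eps(x_1,t)\rho^\eps(x_2,t)$ evolves under the two-\emph{independent}-walker generator $\mathcal L_{\mathrm{ind}}$; subtracting,
\[
\partial_t\psi^\eps=\widetilde{\mathcal L}\psi^\eps+G^\eps,\qquad G^\eps(y_1,y_2;s):=-\mathbf 1_{\{(y_1,y_2)\in E\}}\,c_{y_1,y_2}\,(\rho^\eps(y_1,s)-\rho^\eps(y_2,s))^2,
\]
with $c=1/2$ on bulk edges and $c=1/(2M)$ on reservoir edges; in particular $G^\eps\le 0$.

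Let $(X_1^*(u),X_2^*(u))$ denote the two exclusion walks and $\bE^{\mathrm{excl}}_{(x_1,x_2)}$ the associated expectation. Duhamel's formula gives
\[
\psi^\eps(x_1,x_2;\eps^{-2}t)=\bE^{\mathrm{excl}}_{(x_1,x_2)}[\psi^\eps_0(X_1^*(\eps^{-2}t),X_2^*(\eps^{-2}t))]+\int_0^{\eps^{-2}t}\bE^{\mathrm{excl}}_{(x_1,x_2)}[G^\eps(X_1^*(u),X_2^*(u);\eps^{-2}t{-}u)]\,du.
\]
By the product structure of the initial measure on $\Lambda_N$ and the independence of the three blocks, $\psi^\eps_0$ vanishes whenever its two arguments lie in different regions or both in $\Lambda_N$, and equals the hypergeometric covariance $-p_\pm(1-p_\pm)/(M-1)=O(\eps)$ when both lie in the same reservoir (with $p_\pm=n_{0,\pm}/M$). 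Hence the first Duhamel term is $O(\eps)$.

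To control the integral I estimate $G^\eps$ by edge type. On a bulk edge $(y,y{+}1)$, the discrete gradient $\rho^\eps(y{+}1,\cdot)-\rho^\eps(y,\cdot)$ satisfies the same discrete heat equation as $\rho^\eps$, with boundary data $\rho^\eps(1,\cdot)-\rho^\eps_-(\cdot)=2M\,\dot\rho^\eps_-$ and its right analogue, both of order $\eps$ in the diffusive scaling thanks to \eqref{2.7}; combined with the $O(\eps)$ initial gradient afforded by $u_0\in C^1$, the discrete maximum principle gives $|\rho^\eps(y{+}1,s)-\rho^\eps(y,s)|=O(\eps)$ uniformly, so the per-edge source is $O(\eps^2)$. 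On a reservoir edge, $c=O(\eps)$ and the $O(\eps)$ boundary discrepancy give a per-edge source of size $O(\eps^3)$. A standard occupation-time estimate (each $X_i^*$ has the uniform invariant law on the $\sim 3N$-site graph, so that $\int_0^{\eps^{-2}t}\bP[X_i^*(u)=y]\,du=O(\eps^{-1})$ for every fixed site) gives $\int_0^{\eps^{-2}t}\bP[(X_1^*,X_2^*)\in\text{bulk edge}]\,du=O(\eps^{-1})$ and similarly for reservoir edges, yielding bulk contribution $O(\eps)$ and reservoir contribution $O(\eps^2)$; the theorem follows.

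The main obstacle I anticipate is the uniform-in-time gradient bound on $\rho^\eps$: absent any compatibility between $u_0$ and $v_{0,\pm}$, the boundary data of the gradient equation is only $O(\eps)$ after an initial layer, and the maximum-principle argument must be supplemented by a separate treatment of short times (e.g.\ by a standalone estimate of the Duhamel integral over $[0,\delta\eps^{-2}]$ using only $\|G^\eps\|_\infty\le C$ and the fact that the corresponding local-time contribution is $o(1)$ for small $\delta$). A secondary subtlety is that the occupation-time bound must be applied to the joint two-exclusion-walk law, not merely to each marginal; because the stationary single-walker law is uniform and the walks are identical in distribution to two sticky random walkers with exclusion, the bound on $\bP[(X_1^*,X_2^*)\in\text{edge}]$ follows from the marginal bound by a union-type argument, but the justification is part of the technical work.
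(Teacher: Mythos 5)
Your skeleton is in fact the same as the paper's: the paper writes the two-point function via duality for the stirring pair $(X_1,X_2)$, compares it with two independent walks $(Y_1,Y_2)$ through Liggett's integration-by-parts formula, and the resulting source term $\bar g$ is exactly your $G^\eps$ --- squared discrete gradients of the evolved one-particle density on bulk edges (rate $1/2$) and reservoir edges (rate $1/2N$); your observation that the initial within-reservoir covariance is hypergeometric and $O(\eps)$ is a nice point the paper glosses over. The genuine gap is in your two key estimates. First, the gradient bound: you correctly flag that the uniform $O(\eps)$ maximum-principle bound fails without compatibility between $u_0$ and $v_{0,\pm}$, but your proposed repair does not close quantitatively. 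With only $\|G^\eps\|_\infty\le C$, the Duhamel contribution of the window where the density-time is below $\delta\eps^{-2}$ is bounded by $C$ times the expected adjacency local time over a walk-time window of length $\delta\eps^{-2}$; since at large times the adjacency probability of the pair is of order $N^{-1}$ (it cannot be smaller, the invariant measure already gives that order), this local time is of order $\delta\eps^{-2}\cdot N^{-1}=\delta N$, which diverges as $\eps\to0$ --- it is not $o(1)$ for small $\delta$. What is actually needed, and what the paper's Lemma \ref{boundsquare} supplies by a purely probabilistic coupling (two walks glued at distance $1$ until one enters a reservoir, then independent until coalescence), is a bound $CN^{-(1-\alpha)}$ on bulk-edge gradients \emph{uniformly in $s>0$}, together with $1/N+C/\sqrt{s}$ on reservoir edges; the $C/\sqrt s$ blow-up at short times is then harmless because those edges carry the rate $1/2N$. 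No maximum principle and no short-time layer surgery are required.

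Second, your occupation-time step assumes what is in fact the hardest estimate in the paper. That $\int_0^{\eps^{-2}t}\bP[X(u)=y]\,du=O(\eps^{-1})$ does not follow from the invariant law being uniform; one needs a quantitative pointwise decay of $P_x(Y(\theta)=y)$ for the single walk with slow boundary edges into size-$N$ reservoirs. The paper's Lemma \ref{boundprob} proves only $P_x(Y(\theta)=y)\le C/\sqrt{\theta}+C N^{-1/3}$, and even this requires a decomposition over the last visit to the reservoirs together with hitting-time lower bounds of the type $P_N(\tau_0^{Y^{\rm sp}}\wedge\tau_{N+1}^{Y^{\rm sp}}>\sigma)\ge C/N$ imported from \cite{DPTV3}. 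Note the bookkeeping: with only $N^{-1/3}$ decay the edge local time over $[0,N^2t]$ is $O(N^{5/3})$, not $O(N)$, and the proof survives only because it is multiplied by the squared bulk gradient $N^{-2(1-\alpha)}$ with $\alpha<1/6$ --- this is precisely the interplay your accounting ($O(\eps^2)$ per edge times $O(\eps^{-1})$ local time) silently replaces by the unproved $N^{-1}$-equilibration bound. Liggett's inequality \eqref{Lig} does legitimately transfer the marginal bound to the pair, as you anticipate, but the marginal bound itself is the missing work, and as it stands your proposal has two holes exactly where the paper's Lemmas \ref{boundsquare} and \ref{boundprob} do their job.
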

Theorem \ref{POC} is proved in Section \ref{PrPOC}.
\vskip.5cm
\subsection{Future works}
In a paper in preparation, we will show that the solution to the hydrodynamic limit equation can be represented probabilistically in terms of sticky Brownian motions. Moreover, we also study the long time behavior of our particle system at the time scale $N^{2+\alpha}t$ for some $\alpha>0$, where the size of each reservoir is much larger than the one of the channel.
\section{Proof of Theorem \ref{PDE}}\label{PrPDE}
We split the proof into several steps.
\subsection{Idea of the proof}
\begin{defin}\label{SRW}
Sticky random walk $(X(t))_{t\geq 0}$ moving on $\bar{\Lambda}_N$ is a continuous time random walk with jump rates $c(x,x\pm 1)=\dfrac{1}{2}, \forall x\in\Lambda_N$ and $c(0,1)=c(N+1,N)=\dfrac{1}{2N}$.

More precisely, the generator $L$ of the random walk $X$ is given by
\begin{align*}
Lf(x)=&\mathbf{1}_{1\leq x\leq N}\,\dfrac{1}{2}[f(x+1)+f(x-1)-2f(x)]\\
+&\mathbf{1}_{x=0}\,\dfrac{1}{2N}[f(x+1)-f(x)]+\mathbf{1}_{x=N+1}\,\dfrac{1}{2N}[f(x-1)-f(x)].
\end{align*}
\end{defin}

This means that when in $x \in \Lambda_N$, the sticky random walk $X$ waits an exponential time of mean $1$ and then jumps with equal probability to the neighboring sites. When it is at $0$ or $N+1$, the waiting time is exponential with mean $2N$ and the jump is respectively to $1$ and $N$.

We call $e^{Lt}, P_x, E_x$ and $p_t(\cdot,\cdot)$ the semigroup, the law, the expectation and the transition density of the random walk $X$ starting from $x$.
\begin{prop}
For any $x\in \bar{\Lambda}_N$ and $t\geq 0$,
\begin{equation}\label{duality*}
 \rho^{\eps}(x,t)=(e^{Lt}\rho_0^{\eps})(x)=E_x\big[\rho^{\eps}(X(t),0)\big]=\dsum_{y=0}^{N+1}\rho^{\eps}(y,0)\,p_t(x,y).
\end{equation}
\end{prop}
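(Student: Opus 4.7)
The plan is to recognize the proposition as a Kolmogorov/duality identity: the discrete function $\bar\Lambda_N\ni x\mapsto \rho^{\eps}(x,t)$ solves the linear ODE $\dot\varphi = L\varphi$ driven by the sticky random walk generator, and hence coincides with $e^{Lt}\rho_0^{\eps}$ by uniqueness of solutions of a linear ODE on the finite-dimensional space $\mathbb{R}^{\bar\Lambda_N}$. Everything then reduces to matching, site by site, the evolution equations \eqref{2.6}--\eqref{2.7} already derived for the particle system against the sticky rates in Definition \ref{SRW}.

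First I would verify that for every $x\in\bar\Lambda_N$,
\begin{equation*}
\frac{d}{dt}\rho^{\eps}(x,t) = \bigl(L\rho^{\eps}(\cdot,t)\bigr)(x),
\end{equation*}
where $\rho^{\eps}(\cdot,t)$ is viewed as a function on $\bar\Lambda_N$ via the boundary convention $\rho^{\eps}(0,t):=n_-(t)/M$, $\rho^{\eps}(N+1,t):=n_+(t)/M$. For interior sites $2\le x\le N-1$ this is the usual SSEP computation recorded in \eqref{2.6}. At the channel boundary $x=1$ one computes $\mathbf{L}\eta(1)$ directly from the definition of $c_1$: the exchange term with the reservoir produces (after collecting the two cases $\eta(1)=0,1$) a contribution $\tfrac12(n_-/M-\eta(1))$, which together with the bulk term $\tfrac12(\eta(2)-\eta(1))$ and the convention $\eta(0):=n_-/M$ gives exactly the discrete Laplacian $\tfrac12(\eta(0)+\eta(2)-2\eta(1))$; the site $x=N$ is symmetric. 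At the reservoir sites $x=0$ and $x=N+1$, the required identities read $\tfrac{1}{2M}(\rho^{\eps}(1,t)-\rho^{\eps}_-(t))$ and $\tfrac{1}{2M}(\rho^{\eps}(N,t)-\rho^{\eps}_+(t))$, which are exactly \eqref{2.7}; invoking the standing hypothesis $M=N$ turns the coefficient $\tfrac{1}{2M}$ into $\tfrac{1}{2N}$, matching the sticky rates $c(0,1)=c(N+1,N)=\tfrac{1}{2N}$ of Definition \ref{SRW}.

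Once the identity $\dot\rho^{\eps}(\cdot,t) = L\rho^{\eps}(\cdot,t)$ is established, uniqueness of the associated linear ODE (on the finite set $\bar\Lambda_N$) yields $\rho^{\eps}(\cdot,t) = e^{Lt}\rho_0^{\eps}$. The remaining two equalities in \eqref{duality*} are standard facts about the semigroup of a continuous-time finite-state Markov chain: $(e^{Lt}f)(x) = E_x[f(X(t))] = \sum_{y\in\bar\Lambda_N} f(y)\,p_t(x,y)$.

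The argument is essentially routine; the only point demanding care is the boundary bookkeeping. Specifically, one must verify that the reservoir coefficient $1/(2M)$ appearing in \eqref{2.7} coincides with the sticky waiting rate $1/(2N)$ at the endpoints $0$ and $N+1$, which is precisely where the assumption $M=N$ enters; and one must check that the bulk formula \eqref{2.6} extends to $x=1,N$ thanks to the extension $\rho^{\eps}(0,t):=n_-(t)/M$, $\rho^{\eps}(N+1,t):=n_+(t)/M$. No genuine analytical obstacle arises.
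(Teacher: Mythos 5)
Your proposal is correct and in substance coincides with the paper's proof: the paper shows $\frac{d}{ds}E_x\big[\rho^{\eps}(X(t-s),s)\big]=0$ and notes that this reduces to checking that the actions of the two generators $\mathbf{L}$ and $L$ agree, which is exactly the site-by-site matching of \eqref{2.6}--\eqref{2.7} with the sticky rates (including the point that $M=N$ turns $1/(2M)$ into $1/(2N)$) that you carry out. Your packaging via the forward equation $\dot\rho^{\eps}=L\rho^{\eps}$ plus uniqueness for a finite-dimensional linear ODE is an equivalent formulation of the same interpolation argument, and you actually make the boundary bookkeeping more explicit than the paper does.
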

\begin{proof}
We claim that for $0\leq s<t$,
$$ \dfrac{d}{ds}E_x\Big[\rho^{\eps}(X(t-s),s)\Big] =0.$$
Indeed, by the definition of $\rho^{\eps}$,
$$ \dfrac{d}{ds}E_x\Big[\rho^{\eps}(X(t-s),s)\Big] = E_x \mathbf{E}^{\eps}\Big[\mathbf{L}\eta(X(t-s),s)\Big]-E_x \mathbf{E}^{\eps}\Big[L\eta(X(t-s),s)\Big].$$
Then the claim follows from checking that the actions of the two generators give the same expression. It implies that 
$$ \rho^{\eps}(x,t)=(e^{Lt}\rho_0^{\eps})(x)=E_x\big[\rho^{\eps}(X(t),0)\big]. $$
\end{proof}
Let $\tau$ be the first hitting time of $X$ into $\{0,N+1\}$.
\begin{cor}\label{stoptime}
$$ \rho^{\eps}(x,t)=E_x\Big[\rho^{\eps}(X(t),0)\mathbf{1}_{\tau>t}+\rho^{\eps}(X(\tau),t-\tau)\mathbf{1}_{\tau\leq t}\Big]. $$
\end{cor}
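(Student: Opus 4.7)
The plan is to apply the strong Markov property of the sticky random walk $X$ at the stopping time $\tau$, and then re-apply the duality identity \eqref{duality*} from the previous proposition at the starting point $X(\tau) \in \{0, N+1\}$.

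First I would start from the identity $\rho^{\eps}(x,t) = E_x[\rho^{\eps}(X(t),0)]$ proved just above and split the expectation according to whether the trajectory has hit the boundary by time $t$:
\begin{equation*}
\rho^{\eps}(x,t) = E_x\bigl[\rho^{\eps}(X(t),0)\mathbf{1}_{\tau>t}\bigr] + E_x\bigl[\rho^{\eps}(X(t),0)\mathbf{1}_{\tau\le t}\bigr].
\end{equation*}
The first term already has the form that appears in the claim, so the task is to rewrite the second term.

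On the event $\{\tau\le t\}$ the walk sits at $X(\tau)\in\{0,N+1\}$ at the stopping time and then continues for a further $t-\tau$ time units. Applying the strong Markov property at $\tau$ (which is valid because $\tau$ is a stopping time for a jump Markov process on a finite state space) gives
\begin{equation*}
E_x\bigl[\rho^{\eps}(X(t),0)\mathbf{1}_{\tau\le t}\bigr] = E_x\Bigl[\mathbf{1}_{\tau\le t}\, E_{X(\tau)}\bigl[\rho^{\eps}(X(t-\tau),0)\bigr]\Bigr].
\end{equation*}
Now I would apply the duality identity \eqref{duality*} inside the conditional expectation, with the state $X(\tau)$ in the role of $x$ and time $t-\tau$ in the role of $t$, to get $E_{X(\tau)}[\rho^{\eps}(X(t-\tau),0)] = \rho^{\eps}(X(\tau),t-\tau)$. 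Substituting and combining the two pieces yields the stated corollary.

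The only subtle point is that the inner application of \eqref{duality*} must be done carefully, since $t-\tau$ is itself random; one handles this by conditioning on $\mathcal{F}_\tau$ and noting that, conditionally, $\tau$ and $X(\tau)$ are fixed and \eqref{duality*} applies for each deterministic value of $t-\tau$ in $[0,t]$. Apart from that bookkeeping, the argument is just strong Markov plus duality, so I don't foresee any real obstacle.
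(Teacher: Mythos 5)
Your proposal is correct and takes essentially the same route as the paper: the paper's proof likewise starts from \eqref{duality*}, splits the expectation on $\{\tau>t\}$ versus $\{\tau\le t\}$, applies the strong Markov property at $\tau$, and then re-applies the duality at the point $X(\tau)$ with elapsed time $t-\tau$ to identify $E_{X(\tau)}\big[\rho^{\eps}(X(t-\tau),0)\big]=\rho^{\eps}(X(\tau),t-\tau)$. The only cosmetic difference is that the paper carries out this computation with indicators $\mathbf{1}_{X(t)=y}$ summed against $\rho^{\eps}(y,0)$ rather than working directly with the function $\rho^{\eps}(\cdot,0)$ as you do.
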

\begin{proof}
By using \eqref{duality*}, we write
\begin{align*}
\rho^{\eps}(x,t)=&\dsum_{y=0}^{N+1}\rho^{\eps}(y,0)P_x(X(t)=y)\\
=&\dsum_{y=0}^{N+1}\rho^{\eps}(y,0)\Big\{E_x\big[\mathbf{1}_{\tau>t}\mathbf{1}_{X(t)=y}\big]+E_x\big[\mathbf{1}_{\tau\leq t}E_{X(\tau)}[\mathbf{1}_{X(t-\tau)=y}]\big]\Big\}\\
=&E_x\Big[\mathbf{1}_{\tau>t}\dsum_{y=0}^{N+1}\rho^{\eps}(y,0)\mathbf{1}_{X(t)=y}\Big]+E_x\Big[\mathbf{1}_{\tau\leq t}E_{X(\tau)}\Big[\dsum_{y=0}^{N+1}\rho^{\eps}(y,0)\mathbf{1}_{X(t-\tau)=y}\Big]\Big]\\
=&E_x\Big[\rho^{\eps}(X(t),0)\mathbf{1}_{\tau>t}\Big]+E_x\Big[E_{X(\tau)}\big[\rho^{\eps}(X(t-\tau),0)\big]\mathbf{1}_{\tau\leq t}\Big]\\
=&E_x\Big[\rho^{\eps}(X(t),0)\mathbf{1}_{\tau>t}\Big]+E_x\Big[\rho^{\eps}(X(\tau),t-\tau)\mathbf{1}_{\tau\leq t}\Big].
\end{align*}
\end{proof}
\noindent
{\it Idea of the proof of Theorem \ref{PDE}.}
By Corollary \ref{stoptime}, we have a representation of $\rho^{\eps}$ in terms of the sticky random walk $X$ and $\rho^{\eps}_{\pm}$. Therefore, in order to obtain the convergence of $\rho^{\eps}$, we will first study about sticky random walks and derive some regularity properties of $\rho^{\eps}$. Due to these properties, we can attain that $\rho^{\eps}_{\pm}$ has a subsequence converging uniformly to some limit functions $v_{\pm}$, then establish a relation between $v_{\pm}$ and the limit function $u$ of the corresponding subsequence of $\rho^{\eps}$. Next we will identify $v_{\pm}$ in a weak form and prove their continuous differentiability in order to deduce the regularity of $u$. In the final step, the verification of the uniqueness of $v_{\pm}$ enables us to conclude the uniform convergence of the original sequence $\rho^{\eps}$.

\subsection{Sticky random walk}
Like the sticky Brownian motion can be realized as a time change of the standard Brownian motion, the sticky random walk can also be realized as a time change of the simple random walk. In the latter case, the analysis is quite elementary as we shall see. 

For $x=[Nr],r\in [0,1]$, let $Y^{\rm sp}$ be a simple symmetric random walk on $\mathbb{Z}$ starting from $x$. Recall that the sequence of rescaled random walks $N^{-1}Y^{\rm sp}(N^2t)$ converges uniformly almost surely on compact intervals of $[0,\infty)$ to a Brownian motion $B, B_0=r$, defined on some rich enough common probability space $(\tilde{\Omega}, \mathcal{F}, P)$, see \cite{K}.

We denote by $Y^{\rm rf}$ the simple random walk $Y^{\rm sp}$ reflected at $0$ and $N+1$. Namely, at $0$ and $N+1$, the reflecting random walk $Y^{\rm rf}$ jumps at rate $1$. The relation between reflecting and sticky random walk can be understood by looking at Figure 1 where each length of time spent by the reflecting random walk at $0$ and $N+1$ is amplified by a factor $2N-1$ for the sticky random walk. 
\begin{figure}
\begin{center}
\begin{tikzpicture}[scale=0.5]
\draw (0,0) -- (0,-4) [xshift=5cm] (0,0) -- (0,-1) -- (1,-1) -- (1,-2.7) -- (2,-2.7) -- (2,-3) -- (3,-3) -- (3,-4) [xshift=3cm] (0,0) -- (0,-4) [xshift=5cm] (0,0) -- (0,-4) [xshift=5cm] (0,0) -- (0,-1) -- (1,-1) -- (1,-2.7) -- (2,-2.7) -- (2,-3) -- (3,-3) -- (3,-3.5) [xshift=3cm] (0,0) -- (0,-4);
\draw (0,-4) -- (0,-14) [xshift=5cm] (3,-3.5)--(2,-3.5)--(2,-4.5)--(3,-4.5)--(3,-6)--(2,-6)--(2,-7.3)--(1,-7.3)--(1,-9.1) [xshift=3cm] (0,-4) -- (0,-14) [xshift=5cm] (0,-4)--(0,-14) [xshift=5cm] (3,-4.5)--(2,-4.5)--(2,-5.5)--(3,-5.5)--(3,-10)--(2,-10)--(2,-11.3)--(1,-11.3)--(1,-13.1) [xshift=3cm] (0,-4) -- (0,-14);
\draw[very thick] [xshift=5cm] (3,-3)--(3,-3.5) [xshift=13cm] (3,-3)--(3,-4.5);
\draw[very thick] [xshift=5cm] (3,-4.5)--(3,-6) [xshift=13cm] (3,-5.5)-- (3,-10);
\end{tikzpicture}
\end{center}
\caption{Reflecting random walk and sticky random walk on $\bar{\Lambda}_N$}
\end{figure}
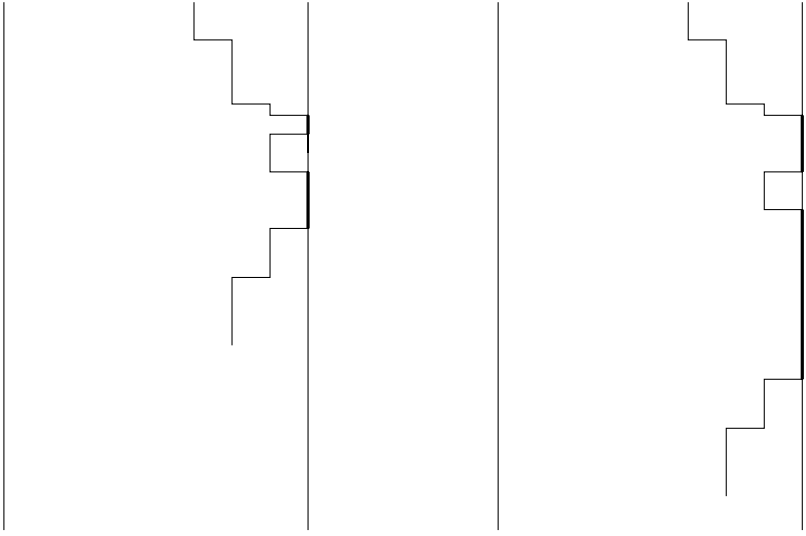

More precisely, let us call
  \begin{equation*}
 \mathbf{T}(0,N+1;t;Y^{\rm rf}) = \int_0^t \big( \mathbf 1_{Y^{\rm rf}(s)=0}+ \mathbf 1_{Y^{\rm rf}(s)=N+1}\big)\,ds
 \end{equation*}
 the local time spent by $Y^{\rm rf}$ at $0$ and $N+1$. Then 
\begin{prop}
The sticky random walk $X$ can be realized by setting
  \begin{equation}\label{realize}
X\Big(t+ (2N-1)\mathbf{T}(0,N+1;t;Y^{\rm rf})\Big)=Y^{\rm rf}(t).
 \end{equation}
\end{prop}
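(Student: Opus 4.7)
The plan is to introduce the continuous, strictly increasing time change
$$\phi(t) := t + (2N-1)\,\mathbf{T}(0,N+1;t;Y^{\rm rf}),$$
denote its inverse by $\psi$, define $\tilde X(s) := Y^{\rm rf}(\psi(s))$, and check that $\tilde X$ has the law of the sticky random walk $X$ of Definition \ref{SRW}. Since $Y^{\rm rf}$ is cadlag and piecewise constant, $\phi$ is continuous and piecewise linear with slope $1$ on the time set where $Y^{\rm rf}\notin\{0,N+1\}$ and slope $2N$ on the set where $Y^{\rm rf}\in\{0,N+1\}$. Hence $\tilde X$ is cadlag and piecewise constant and visits the same ordered sequence of states as $Y^{\rm rf}$; so I only need to identify the holding-time distribution and the one-step jump distribution at each site, and appeal to the fact that a continuous-time Markov chain on the finite set $\bar\Lambda_N$ is characterised by these two ingredients.

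At an interior site $x\in\Lambda_N$ the local time $\mathbf{T}(0,N+1;\cdot;Y^{\rm rf})$ is constant during the $Y^{\rm rf}$-sojourn, so $\phi$ has slope $1$ there and the sojourn time of $\tilde X$ at $x$ equals that of $Y^{\rm rf}$: an exponential of mean $1$, followed by a jump to $x\pm 1$ with probability $1/2$ each. This reproduces the action of $L$ on functions of $x\in\Lambda_N$. At the boundary site $x=0$, I would invoke the strong Markov property of $Y^{\rm rf}$ at the entry time into $0$: its sojourn $\sigma$ at $0$ is exponential of mean $1$ and is followed deterministically by a jump to $1$; during this sojourn $\phi$ has slope $2N$, so the corresponding sojourn of $\tilde X$ at $0$ has length $2N\sigma$, which is exponential of mean $2N$, and is also followed by a jump to $1$. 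This matches the rate $c(0,1)=1/(2N)$ in Definition \ref{SRW}. The case $x=N+1$ is symmetric.

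Iterating this analysis across successive excursions to $\{0,N+1\}$ via the strong Markov property yields equality in law of $\tilde X$ and $X$, which is the identification \eqref{realize}. The only non-routine point to watch is the bookkeeping needed to guarantee that the sequence of holding times produced by $\tilde X$ is genuinely an independent sequence of exponentials with the parameters prescribed by $L$; this follows because the holding times of $Y^{\rm rf}$ are conditionally independent exponentials given the jump chain and the deterministic dilation by $2N$ performed at boundary visits preserves both independence and exponentiality, while the jump chain of $\tilde X$ coincides with that of $Y^{\rm rf}$ and hence with that of the sticky random walk.
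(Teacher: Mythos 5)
Your proposal is correct and follows essentially the same route as the paper: both verify that the time change $t\mapsto t+(2N-1)\mathbf{T}(0,N+1;t;Y^{\rm rf})$ leaves interior sojourns unchanged (exponential of mean $1$) and dilates boundary sojourns by the factor $2N$ (yielding exponentials of mean $2N$, matching the rates $c(0,1)=c(N+1,N)=1/(2N)$ of Definition \ref{SRW}). If anything, you make explicit two points the paper leaves implicit --- that the jump chain is preserved by the time change and that the holding times remain conditionally independent given the jump chain --- which is a welcome completion of the argument rather than a deviation from it.
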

\begin{proof}
We will check that for this setting, $X$ is really a sticky random walk as already defined in Definition \ref{SRW}. More precisely, we show that $X$ stays at each site $x\in\Lambda_N$ for an exponential time of parameter $1$ and stays at each site $x\in\{0,N+1\}$ for an exponential time of parameter $1/(2N)$.

Firstly, suppose that $Y^{\rm rf}(t)=x\in\Lambda_N, \forall t_1\leq t< t_2$, where $Y^{\rm rf}(t_1^-)=x\pm 1$ and $Y^{\rm rf}(t_2)=x\pm 1$, so $P(t_2-t_1>\sigma)=e^{-\sigma}$. Then the length of time that $X$ spends at $x$ is given by
$$ \mathcal{T}_x(X):=t_2-t_1+(2N-1)\dint_{t_1}^{t_2}\big( \mathbf 1_{Y^{\rm rf}(s)=0}+ \mathbf 1_{Y^{\rm rf}(s)=N+1}\big)\,ds=t_2-t_1. $$

It is clear that $\mathcal{T}_x(X)\sim \Exp(1)$.

Secondly, suppose that $Y^{\rm rf}(t)=0, \forall s_1\leq t< s_2$, where $Y^{\rm rf}(s_1^-)=1$ and $Y^{\rm rf}(s_2)=1$, so $P(s_2-s_1>\sigma)=e^{-\sigma}$. Then the length of time that $X$ spends at $0$ is given by
\begin{align*}
\mathcal{T}_0(X):=&s_2-s_1+(2N-1)\dint_{s_1}^{s_2}\big( \mathbf 1_{Y^{\rm rf}(s)=0}+ \mathbf 1_{Y^{\rm rf}(s)=N+1}\big)\,ds\\
=&s_2-s_1+(2N-1)(s_2-s_1)=2N(s_2-s_1).
\end{align*}

It is easy to check that $\mathcal{T}_0(X)\sim \Exp(1/(2N))$. Similarly, we also obtain that $\mathcal{T}_{N+1}(X)\sim \Exp(1/(2N))$. It completes our verification.
\end{proof}
\begin{prop}
For any function $\varphi$ defined on $\bar{\Lambda}_N$,
\begin{equation}\label{selfadj}
 <\varphi, e^{Lt}\rho^{\eps}_0>=<\rho^{\eps}_0,e^{Lt}\varphi>, 
\end{equation}
where the product is defined by
$$ <f,g>=\dsum_{x=1}^Nf(x)g(x)+\dfrac{1}{\eps}\big(f(0)g(0)+f(N+1)g(N+1)\big). $$

Moreover, let $\mu^{\eps}$ be the probability on $\bar{\Lambda}_N$ defined by $c_{\eps}(\mathbf{1}_{x\in\Lambda_N}+\eps^{-1}\mathbf{1}_{x\in\{0,N+1\}})$, $c_{\eps}$ be the normalized constant. Then $\mu^{\eps}$ is invariant and reversible for the sticky process $X$. 
\end{prop}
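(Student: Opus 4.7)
The plan is to reduce both assertions to verifying the detailed balance identity for the sticky random walk with respect to $\mu^{\eps}$. Detailed balance immediately yields invariance (by summing against the constant function), and since the bilinear form in the statement satisfies $<f,g> = c_{\eps}^{-1} \sum_{x \in \bar{\Lambda}_N} f(x) g(x) \mu^{\eps}(x)$, it also gives the symmetry of $L$ with respect to $<\cdot,\cdot>$. On this finite state space, symmetry of $L$ transfers to $e^{Lt}$ by expanding the matrix exponential termwise, which together with the symmetry of the bilinear form itself yields \eqref{selfadj}.

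First I would check $\mu^{\eps}(x)\, c(x,y) = \mu^{\eps}(y)\, c(y,x)$ for each nearest-neighbor pair. On interior bonds $\{x,x+1\}$ with $1 \leq x \leq N-1$ the identity is immediate, as both masses equal $c_{\eps}$ and both rates equal $1/2$. The content lies at the boundary bond $\{0,1\}$: the left-hand side equals $c_{\eps} N \cdot 1/(2N) = c_{\eps}/2$ while the right-hand side equals $c_{\eps} \cdot 1/2$, and these agree precisely because the boundary weight $\eps^{-1} = N$ in $\mu^{\eps}$ was chosen to cancel the slow boundary rate $1/(2N)$. The bond $\{N, N+1\}$ is handled symmetrically.

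With detailed balance in hand, I would derive $<Lf, g> = <f, Lg>$ for arbitrary $f, g$ on $\bar{\Lambda}_N$, iterate to obtain symmetry of every power $L^k$, and conclude symmetry of $e^{Lt}$ from the power series expansion; applied to $\varphi$ and $\rho_0^{\eps}$, and combined with the fact that $<\cdot, \cdot>$ is itself symmetric, this gives \eqref{selfadj}. The whole argument is essentially bookkeeping; the one substantive design point is the choice of the boundary weight $\eps^{-1}$ in $\mu^{\eps}$, which is dictated by the requirement that the slowdown of the boundary jump be absorbed into reversibility, so I do not anticipate any genuine obstacle here.
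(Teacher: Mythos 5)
Your proposal is correct, and it organizes the argument in the reverse logical order from the paper, which is worth spelling out. The paper proves \eqref{selfadj} first and treats reversibility as a corollary: it verifies the kernel symmetries $p_t(x,y)=p_t(y,x)$ for $x,y\in\Lambda_N$, $p_t(0,N+1)=p_t(N+1,0)$, and $\eps^{-1}p_t(0,x)=p_t(x,0)$ (with the analogue at $N+1$) directly, by expanding $e^{Lt}$ in its power series and proving $\eps^{-1}L^n\mathbf{1}_x(0)=L^n\mathbf{1}_0(x)$ by induction on $n$; equation \eqref{selfadj} then follows by writing out both sides, and the invariance and detailed balance of $\mu^{\eps}$ are dispatched afterwards ``by a similar argument.'' You instead take the rate-level detailed balance $\mu^{\eps}(x)c(x,y)=\mu^{\eps}(y)c(y,x)$ as the primitive fact --- a one-line check on each bond, the only nontrivial ones being $\{0,1\}$ and $\{N,N+1\}$, where the boundary weight $\eps^{-1}=N$ exactly cancels the slow rate $1/(2N)$ --- then deduce symmetry of $L$ in the weighted product, iterate to $L^k$, pass to $e^{Lt}$ via the (finite-dimensional, hence unproblematic) exponential series, and obtain \eqref{selfadj} and invariance as standard consequences. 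Both routes ultimately rest on the same series expansion, but yours replaces the paper's induction on $L^n$ applied to indicator functions by the cleaner and more standard reversible-chain packaging, and it gets invariance for free rather than by a separate verification. What the paper's direct computation buys is the explicit kernel identities $\eps^{-1}p_t(0,x)=p_t(x,0)$ and $\eps^{-1}p_t(N+1,x)=p_t(x,N+1)$, which are reused later (e.g.\ in the proof of Proposition \ref{uniconv}, where $\rho^{\eps}(0,t)$ is rewritten as $\rho^{\eps}(0,0)p_t(0,0)+\eps\sum_{x=1}^N\rho^{\eps}(x,0)p_t(x,0)+\rho^{\eps}(N+1,0)p_t(N+1,0)$); note, though, that your argument also yields them immediately, since self-adjointness of $e^{Lt}$ in $\ell^2(\mu^{\eps})$ tested on indicator functions is precisely that statement.
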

\begin{proof}
To prove the equality \eqref{selfadj}, we need to verify the following equalities
\begin{align*}
p_t(x,y)&=p_t(y,x), \forall x,y\in\Lambda_N\\
p_t(0,N+1)&=p_t(N+1,0)\\
\dfrac{1}{\eps}p_t(0,x)&=p_t(x,0), \forall x\in\Lambda_N\\
\dfrac{1}{\eps}p_t(N+1,x)&=p_t(x,N+1), \forall x\in\Lambda_N.
\end{align*}

Since these equalities can be obtained by the same argument, we just check the third equality. Note that
$$p_t(0,x)=e^{Lt}\mathbf{1}_x(0)=\mathbf{1}_x(0)+\dsum_{n\geq 1}\dfrac{t^n}{n!}L^n\mathbf{1}_x(0).$$

By induction in $n$, we get $\dfrac{1}{\eps}L^n\mathbf{1}_x(0)=L^n\mathbf{1}_0(x), \forall x\in\Lambda_N$. Then the third equality follows. Now using the above equalities, we can rewrite the left hand side of \eqref{selfadj} in the following way
\begin{align*}
&\dsum_{x=1}^N\varphi(x)\Big[\dsum_{y=1}^Np_t(x,y)\rho^{\eps}(y,0)+p_t(x,0)\rho^{\eps}(0,0)+p_t(x,N+1)\rho^{\eps}(N+1,0)\Big]\\
&+\dfrac{1}{\eps}\varphi(0)\Big[\dsum_{y=1}^Np_t(0,y)\rho^{\eps}(y,0)+p_t(0,0)\rho^{\eps}(0,0)+p_t(0,N+1)\rho^{\eps}(N+1,0)\Big]\\
&+\dfrac{1}{\eps}\varphi(N+1)\Big[\dsum_{y=1}^Np_t(N+1,y)\rho^{\eps}(y,0)+p_t(N+1,0)\rho^{\eps}(0,0)\\
&\quad\quad\hskip5.5cm+p_t(N+1,N+1)\rho^{\eps}(N+1,0)\Big]\\
=&\dsum_{y=1}^N\rho^{\eps}(y,0)\Big[\dsum_{x=1}^Np_t(y,x)\varphi(x)+p_t(y,0)\varphi(0)+p_t(y,N+1)\varphi(N+1)\Big]\\
&+\dfrac{1}{\eps}\rho^{\eps}(0,0)\Big[\dsum_{x=1}^Np_t(0,x)\varphi(x)+p_t(0,0)\varphi(0)+p_t(0,N+1)\varphi(N+1)\Big]\\
&+\dfrac{1}{\eps}\rho^{\eps}(N+1,0)\Big[\dsum_{x=1}^Np_t(N+1,x)\varphi(x)+p_t(N+1,0)\varphi(0)\\
&\quad\quad\hskip5.5cm+p_t(N+1,N+1)\varphi(N+1)\Big]\\
\end{align*}
and it is equal to the right hand side of \eqref{selfadj}. 

Moreover, by a similar argument as above, it is easy to check that for any $t\geq 0$, $\mu^{\eps}e^{Lt}=\mu^{\eps}$ and $\mu^{\eps}$ satisfies the detailed balance condition $\mu^{\eps}(x)c(x,y)=\mu^{\eps}(y)c(y,x), \forall x,y\in\bar{\Lambda}_N$. It implies that $\mu^{\eps}$ is invariant and reversible for the process $X$.
\end{proof}
\begin{cor}[Conservation of mass]
$$ \dsum_{x=1}^N\rho^{\eps}_0(x)+\dfrac{1}{\eps}\big(\rho_0^{\eps}(0)+\rho^{\eps}_0(N+1)\big)=  \dsum_{x=1}^N\rho^{\eps}(x,t)+\dfrac{1}{\eps}\big(\rho^{\eps}(0,t)+\rho^{\eps}(N+1,t)\big), \forall t\geq 0.$$
\end{cor}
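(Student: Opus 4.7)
The claim is the statement $\langle \mathbf{1},\rho^{\eps}(\cdot,t)\rangle = \langle \mathbf{1},\rho^{\eps}_0\rangle$, where $\mathbf{1}$ is the constant function $1$ on $\bar{\Lambda}_N$ and $\langle\cdot,\cdot\rangle$ is the weighted inner product just introduced. My plan is to reduce the identity directly to the two facts proved in the preceding proposition: the duality formula \eqref{duality*} and the self-adjointness relation \eqref{selfadj}.

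First I would invoke \eqref{duality*} to rewrite $\rho^{\eps}(\cdot,t) = e^{Lt}\rho^{\eps}_0$, and then apply \eqref{selfadj} with $\varphi = \mathbf{1}$ to get
$$\langle \mathbf{1},\,e^{Lt}\rho^{\eps}_0\rangle \;=\; \langle \rho^{\eps}_0,\,e^{Lt}\mathbf{1}\rangle.$$
From the generator $L$ in Definition \ref{SRW} one sees immediately that $L\mathbf{1}\equiv 0$, hence $e^{Lt}\mathbf{1}=\mathbf{1}$ for every $t\geq 0$, so the right-hand side collapses to $\langle \rho^{\eps}_0,\mathbf{1}\rangle$, which is the claim.

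An alternative route that avoids going through \eqref{selfadj} is to differentiate $\langle \mathbf{1},\rho^{\eps}(\cdot,t)\rangle$ in time and apply the equations \eqref{2.6} and \eqref{2.7} directly; the bulk sum telescopes to $\tfrac{1}{2}[\rho^{\eps}(0,t)-\rho^{\eps}(1,t)+\rho^{\eps}(N+1,t)-\rho^{\eps}(N,t)]$, and the two boundary terms (each carrying the weight $1/\eps=N=M$) cancel it exactly thanks to \eqref{2.7}. This is a routine computation and really just restates the invariance of $\mu^{\eps}$. I do not foresee any substantive obstacle: the corollary is precisely the reflection at the level of mean density of the fact that the weighted counting measure $\mu^{\eps}$, whose weights match those of $\langle\cdot,\cdot\rangle$ up to the normalization $c_\eps$, is invariant for the sticky random walk.
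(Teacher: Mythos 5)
Your primary argument is exactly the paper's proof: take $\varphi\equiv\mathbf{1}$ in \eqref{selfadj}, use $\rho^{\eps}(\cdot,t)=e^{Lt}\rho^{\eps}_0$ from \eqref{duality*}, and note $e^{Lt}\mathbf{1}=\mathbf{1}$. Your alternative route via differentiating and telescoping with \eqref{2.6}--\eqref{2.7} is also correct (using $M=N$ so the boundary weight $1/\eps$ matches the rate $1/(2M)$), but it is not needed beyond the first argument.
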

\begin{proof}
The result follows from taking $\varphi\equiv 1$ in \eqref{selfadj} and noting that for any $x\in \bar{\Lambda}_N$, $\rho^{\eps}(x,t)=e^{Lt}\rho_0^{\eps}(x)$. 
\end{proof}
\subsection{Regularity properties of $\rho^{\eps}(0,t), \rho^{\eps}(N+1,t)$}
Let us call $\tau_a^{Z}=\inf\{s\geq 0, Z(s)=a\}$ the first time that a process $Z$ hits $a$.
\begin{prop}\label{uniconv}
There exists a constant $C$ such that for any $s,t\geq 0$,
$$|\rho^{\eps}(0,t)-\rho^{\eps}(0,s)|\leq C\eps|t-s|^{\frac{1}{2}}$$
$$|\rho^{\eps}(N+1,t)-\rho^{\eps}(N+1,s)|\leq C\eps|t-s|^{\frac{1}{2}}.$$
\end{prop}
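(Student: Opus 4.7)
The plan is to combine the stochastic representation \eqref{duality*} with the time-change \eqref{realize}. Writing $X(r) = Y^{\rm rf}(\sigma(r))$, where $Y^{\rm rf}$ is the reflecting random walk started at $0$ and $\sigma(\cdot)$ is the continuous non-decreasing inverse of $r \mapsto r + (2N-1)\mathbf{T}(0,N+1;r;Y^{\rm rf})$, one obtains for $s \le t$
\begin{equation*}
\rho^\eps(0,t) - \rho^\eps(0,s) = E\big[\rho_0^\eps(Y^{\rm rf}(\sigma(t))) - \rho_0^\eps(Y^{\rm rf}(\sigma(s)))\big].
\end{equation*}
Crucially, $\sigma(r) \le r$ and $\sigma(s), \sigma(t)$ are stopping times in the natural filtration of $Y^{\rm rf}$, with $\sigma(t)-\sigma(s) \le t-s$.

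Because $\rho_0^\eps(y) = u_0(\eps y)$, the initial profile inherits a Lipschitz modulus of size $\eps\,\mathrm{Lip}(u_0)$ from $u_0$ (for merely continuous $u_0$ the modulus of continuity would be absorbed into the constant $C$). Therefore
\begin{equation*}
|\rho^\eps(0,t) - \rho^\eps(0,s)| \le \eps\,\mathrm{Lip}(u_0)\,E\big|Y^{\rm rf}(\sigma(t)) - Y^{\rm rf}(\sigma(s))\big|,
\end{equation*}
and the proposition reduces to the diffusive bound $E|Y^{\rm rf}(\sigma(t)) - Y^{\rm rf}(\sigma(s))| \le C\sqrt{t-s}$, uniformly in $N$.

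Conditioning on $(\sigma(s), \sigma(t))$ and invoking the strong Markov property at $\sigma(s)$, it suffices to prove the deterministic-time estimate $E|Y^{\rm rf}(r_2) - Y^{\rm rf}(r_1)| \le C\sqrt{r_2-r_1}$, uniformly in $N$. This I would obtain from the Doob decomposition $Y^{\rm rf} = Y^{\rm rf}(0) + M + A$, where $M$ is a martingale with $\langle M \rangle_r = r$ (every jump squares to $1$ and the total jump rate is $1$), so that $E|M(r_2)-M(r_1)| \le \sqrt{r_2-r_1}$ by Doob's $L^2$ inequality, and $A$ is the signed local-time push at $\{0, N+1\}$. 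The local-time contribution is estimated on the diffusive scale either by transferring the corresponding bound for reflecting Brownian motion on $[0,1]$ via the Donsker-type scaling $Y^{\rm rf}([N^2\tau])/N \Rightarrow B^{\rm rf}(\tau)$, or by a direct coupling with a free simple random walk on $\mathbb{Z}$. The monotonicity of $\sqrt{\cdot}$ together with $\sigma(t)-\sigma(s) \le t-s$ then delivers the desired bound.

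The main obstacle is precisely the uniform-in-$N$ displacement bound for the reflecting walk: $Y^{\rm rf}$ is not itself a martingale, its range is capped by $N+1$ over long time intervals, and the local-time correction $A$ must be shown to scale diffusively rather than linearly in time; naive estimates based on the number of jumps give only linear-in-$r_2-r_1$ bounds and miss the $\sqrt{\,\cdot\,}$ regime that we need for intermediate $|t-s|\in[1,N^2]$. The symmetric statement for $\rho^\eps(N+1,t)$ is obtained by the analogous argument with $Y^{\rm rf}$ started from $N+1$.
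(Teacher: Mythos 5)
Your reduction to a Lipschitz bound on the initial profile is where the argument breaks, and it breaks in principle, not just in detail. First, the proposition is used (in Theorem \ref{PDE}) with $u_0$ merely in $C(0,1)$; a modulus of continuity $\omega$ would give bounds of the form $\omega(\eps\sqrt{t-s})$, which is \emph{not} of the form $C\eps\sqrt{t-s}$ and cannot be ``absorbed into the constant'' (for H\"older-$\frac12$ data you would get $\sqrt{\eps}\,(t-s)^{1/4}$, far larger than $\eps\sqrt{t-s}$ when $t-s$ is of order one). Second, and more fundamentally, the dual initial datum is not $u_0(\eps\,\cdot)$ at the sites that matter: $\rho^{\eps}_0(0)=v_{0,-}$ and $\rho^{\eps}_0(N+1)=v_{0,+}$ are arbitrary in $[0,1]$, so $\rho_0^{\eps}$ has an $O(1)$ jump across the boundary bonds --- exactly where your walk starts. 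Take $u_0\equiv 0$ and $v_{0,-}=1$: your displayed inequality, with $\mathrm{Lip}(u_0)=0$, forces $\rho^{\eps}(0,t)\equiv\rho^{\eps}(0,0)$, whereas in fact $\rho^{\eps}(0,0)-\rho^{\eps}(0,t)=1-p_t(0,0)>0$. The factor $\eps$ in the statement does not come from flatness of the data at scale $\eps$; it comes from the stickiness of the boundary site: the walk started at $0$ is found away from $0$ at time $\Delta$ with probability $1-p_{\Delta}(0,0)\le C\eps\sqrt{\Delta}$, while the data are only used through $0\le\rho^{\eps}_0\le 1$.

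That occupation estimate is exactly the content of the paper's proof, which uses no regularity of the data at all: by reversibility of $\mu^{\eps}$ one has $p_{\Delta}(x,0)=\eps^{-1}p_{\Delta}(0,x)$, hence by \eqref{duality*} (applied with $\rho^{\eps}(\cdot,s)\in[0,1]$ as datum, which also gives general $s,t$, not just $s=0$)
\[
|\rho^{\eps}(0,t)-\rho^{\eps}(0,s)|\;\le\;\sum_{y\ne 0}p_{t-s}(0,y)\;=\;\eps\sum_{x=1}^{N}p_{t-s}(x,0)+p_{t-s}(N+1,0),
\]
and then $p_{t-s}(x,0)\le P_x(\tau_0^X\le t-s)$ is bounded by hitting probabilities of the simple and reflecting walks via the local CLT \eqref{LL}, yielding $\sum_x p_{t-s}(x,0)\le C\sqrt{t-s}$ and $p_{t-s}(N+1,0)\le C\eps\sqrt{t-s}$. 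If you want to salvage your plan, replace the Lipschitz step by $|\rho^{\eps}(0,t)-\rho^{\eps}(0,s)|\le P_0(X(t-s)\ne 0)$ and prove $1-p_{\Delta}(0,0)\le C\eps\sqrt{\Delta}$ --- but then the time change \eqref{realize}, the stopping times $\sigma(s),\sigma(t)$, and the displacement bound all do no work. Finally, the one quantitative input your proposal does rest on, the uniform-in-$N$ estimate $E|Y^{\rm rf}(r_2)-Y^{\rm rf}(r_1)|\le C\sqrt{r_2-r_1}$, is only sketched (Doob decomposition plus an unproven diffusive bound on the boundary local-time push), and you yourself flag it as the main obstacle; so even the (incorrectly) reduced claim is not established.
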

\begin{proof}
We will prove the first inequality. For the second inequality, we use a similar argument. From \eqref{duality*}, we can write
\begin{align*}
\rho^{\eps}(0,t) =&\rho^{\eps}(0,0)p_t(0,0)+\eps\dsum_{x=1}^N\rho^{\eps}(x,0)p_t(x,0)+\rho^{\eps}(N+1,0)p_t(N+1,0)\\
\leq& \rho^{\eps}(0,0)+\eps\dsum_{x=1}^Np_t(x,0)+p_t(N+1,0).
\end{align*}

Then we need to bound $p_t(x,0), \forall x\in\Lambda_N$ and $p_t(N+1,0)$. Firstly, we have
$$p_t(x,0)=\dint_0^tP_0(X(t-s)=0)P_x(\tau_0^X=s)\,ds\leq P_x(\tau_0^X\leq t).$$

We consider two cases as follows. For the first case when the sticky random walk $X$ reaches $0$ before $N+1$, we obtain that
$$P_x(\tau_0^X\leq t, \tau_0^X<\tau^X_{N+1})\leq P_x(\tau_0^{Y^{\rm sp}}\leq t)\leq 2P_x(Y^{\rm sp}(t)\leq 0).$$

For the second case when the sticky random walk $X$ reaches $N+1$ before $0$, we observe that by \eqref{realize}, the sticky random walk $X$ always reaches $0$ after the corresponding reflecting random walk $Y^{\rm rf}$. Hence, 
\begin{align*}
P_x(\tau_0^X\leq t, \tau_0^X>\tau^X_{N+1})\leq &P_x(\tau_0^{Y^{\rm rf}}\leq t)\leq P_x(\tau_0^{Y^{\rm sp}}\leq t)+P_x(\tau_{2(N+1)}^{Y^{\rm sp}}\leq t) \\
\leq &2P_x(Y^{\rm sp}(t)\leq 0)+2P_x(Y^{\rm sp}(t)\geq 2(N+1)).
\end{align*}

Then 
$$ \dsum_{x=1}^Np_t(x,0)\leq 4\dsum_{x=1}^N P_x(Y^{\rm sp}(t)\leq 0)+2\dsum_{x=1}^N P_x(Y^{\rm sp}(t)\geq 2(N+1)).$$

By the local central limit theorem, \cite{LL}, there exist positive constants $c_1, \ldots, c_5$ such that
\begin{align}
&|P_x(Y^{\rm sp}(t)=y)-G_t(x,y)|\leq \dfrac{c_1}{\sqrt{t}}G_t(x,y), \quad |x-y|\leq t^{5/8}\notag\\
&P_x(Y^{\rm sp}(t)=y)\leq \min\big(c_2e^{-c_3|x-y|^2/t},c_4e^{-|x-y|(\log |x-y|-c_5)}\big), \quad |x-y|>t^{5/8}\label{LL},
\end{align}
where 
$$ G_t(x,y)=\dfrac{1}{\sqrt{2\pi t}}e^{-\frac{(x-y)^2}{2t}}. $$

After some computations, we obtain that there exists a universal constant $C$ such that
$$ \dsum_{x=1}^N\dsum_{y\leq 0}P_x(Y^{\rm sp}(t)=y)\leq C\sqrt{t}, \quad\quad\quad \dsum_{x=1}^N\dsum_{y\geq 2(N+1)}P_x(Y^{\rm sp}(t)=y)\leq C\sqrt{t}. $$

It implies that 
$$ \dsum_{x=1}^Np_t(x,0)\leq C\sqrt{t}. $$

Secondly, we bound $p_t(N+1,0)$ by using the same argument as above. Namely,
\begin{align*}
p_t(N+1,0)&=\dint_0^tP_0(X(t-s)=0)P_{N+1}(\tau_0^X=s)\,ds\leq P_{N+1}(\tau_0^X\leq t)\\
&\leq P_{N+1}(\tau_0^{Y^{\rm rf}}\leq t)\leq P_{N+1}(\tau_0^{Y^{\rm sp}}\leq t)+P_{N+1}(\tau_{2(N+1)}^{Y^{\rm sp}}\leq t) \\
&\leq 2P_{N+1}(Y^{\rm sp}(t)\leq 0)+2P_{N+1}(Y^{\rm sp}(t)\geq 2(N+1)).
\end{align*}

Using \eqref{LL} again gives us
$$ \dsum_{y\leq 0}P_{N+1}(Y^{\rm sp}(t)=y)\leq C\eps\sqrt{t}, \quad\quad\quad \dsum_{y\geq 2(N+1)}P_{N+1}(Y^{\rm sp}(t)=y)\leq C\eps\sqrt{t}, $$
for some constant $C$.

Hence, we have just proved that $\rho^{\eps}(0,t)-\rho^{\eps}(0,0)\leq C\eps\sqrt{t}$. It remains to check that $\rho^{\eps}(0,0)-\rho^{\eps}(0,t)\leq C\eps\sqrt{t}$. To do this, we notice that
\begin{align*}
\rho^{\eps}(0,0)=&\rho^{\eps}(0,0)\Big(\dsum_{x=0}^{N+1}p_t(0,x)\Big)\\
=&\rho^{\eps}(0,0)p_t(0,0)+\eps\dsum_{x=1}^N\rho^{\eps}(0,0)p_t(x,0)+\rho^{\eps}(0,0)p_t(N+1,0)\\
\leq &\rho^{\eps}(0,t)+\eps\dsum_{x=1}^Np_t(x,0)+p_t(N+1,0),
\end{align*}
where the last inequality follows from the fact that
$$\rho^{\eps}(0,t)=\rho^{\eps}(0,0)p_t(0,0)+\dsum_{x=1}^{N+1}\rho^{\eps}(x,0)p_t(0,x).$$

Then by a similar argument, we obtain the desired inequality.
\end{proof}

Let us call $\tilde{\rho}_{\pm}^{\eps}(s):=\rho^{\eps}_{\pm}(\eps^{-2}s)$. Then the following result follows from Proposition \ref{uniconv}.
\begin{cor}\label{subseq}
There exist subsequences $\tilde{\rho}_{\pm}^{\eps_k}$ that converge uniformly to $v_{\pm}$, respectively.
\end{cor}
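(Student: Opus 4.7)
The corollary is a straightforward application of the Arzelà--Ascoli theorem once the Hölder estimate of Proposition \ref{uniconv} is rescaled. The plan is as follows.

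First I would translate the Hölder bound to the rescaled functions $\tilde{\rho}_{\pm}^{\eps}(s)=\rho^{\eps}_{\pm}(\eps^{-2}s)$. For any $s,u\ge 0$, Proposition \ref{uniconv} gives
\begin{equation*}
|\tilde{\rho}_{\pm}^{\eps}(s)-\tilde{\rho}_{\pm}^{\eps}(u)|
= |\rho^{\eps}_{\pm}(\eps^{-2}s)-\rho^{\eps}_{\pm}(\eps^{-2}u)|
\le C\eps\,|\eps^{-2}s-\eps^{-2}u|^{1/2}
= C\,|s-u|^{1/2},
\end{equation*}
so the factor $\eps$ is exactly killed by the diffusive time rescaling. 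Thus $\{\tilde{\rho}_{\pm}^{\eps}\}_{\eps>0}$ is equicontinuous on every compact interval of $[0,\infty)$ (with a modulus independent of $\eps$), and it is uniformly bounded since $\tilde{\rho}_{\pm}^{\eps}$ takes values in $[0,1]$.

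Next I would apply Arzelà--Ascoli on a fixed compact interval $[0,T_1]$ to extract a subsequence of $\tilde{\rho}_-^{\eps}$ converging uniformly on $[0,T_1]$ to some continuous $v_-$ on that interval. Along that subsequence, the same argument applied to $\tilde{\rho}_+^{\eps}$ produces a further subsequence converging uniformly on $[0,T_1]$ to some $v_+$. Taking an increasing sequence $T_k\uparrow\infty$ and performing a standard diagonal extraction, I would obtain a single subsequence $\eps_k\to 0$ such that $\tilde{\rho}_{\pm}^{\eps_k}$ converge uniformly on every compact subinterval of $[0,\infty)$ to continuous limits $v_{\pm}$ (the Hölder-$1/2$ regularity passes to the limit).

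There is no real obstacle here; the only point that must be handled with some care is that the convergence is on the non-compact half line $[0,\infty)$, which is why the diagonal extraction over $T_k\uparrow\infty$ is needed in order to obtain a single subsequence good for all times simultaneously. Uniqueness of the limits $v_{\pm}$ is not claimed at this stage and will be established later via the free boundary problem.
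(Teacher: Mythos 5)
Your proposal is correct and is precisely the argument the paper intends: the paper states the corollary without proof as an immediate consequence of Proposition \ref{uniconv}, and the implicit reasoning is exactly your observation that the diffusive rescaling turns the bound $C\eps|t-s|^{1/2}$ into an $\eps$-independent H\"older-$\frac12$ modulus, so that uniform boundedness in $[0,1]$ plus Arzel\`a--Ascoli with a diagonal extraction over compacts yields the uniformly convergent subsequences. Your added care about the non-compact time domain and the successive extraction for the $-$ and $+$ families is a correct filling-in of details the paper omits, not a different route.
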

\subsection{Convergence by subsequences}\label{rw}
We denote $\tilde{\rho}^{\eps}(r,t):=\rho^{\eps}([\eps^{-1}r],\eps^{-2}t)$.
\begin{thm}
For any $r\in [0,1], T>0$, the subsequence $ \tilde{\rho}^{\eps_k}$ converges to $u$ uniformly on $[0,T]$. The hydrodynamic limit $u$ is defined, for $r\in(0,1)$ and $t>0$, by
\begin{align}
 u(r,t)=&\dint_0^1u_0(r')[\Theta(r-r',t)-\Theta(r+r',t)]\,dr'\notag\\
&-\dint_{[0,t)}\dfrac{\partial \Theta}{\partial r}(r,t-s)v_-(s)\,ds+\dint_{[0,t)}\dfrac{\partial \Theta}{\partial r}(r-1,t-s)v_+(s)\,ds,\label{exu}
\end{align}
where 
$$ \Theta(r,t)=\dsum_{n=-\infty}^{+\infty}\dfrac{1}{\sqrt{2\pi t}}e^{-\frac{(r+2n)^2}{2t}}. $$
\end{thm}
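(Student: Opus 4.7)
Plan of proof. I would apply the stopping-time identity of Corollary~\ref{stoptime} at the diffusive time $\eps^{-2}t$ and pass to the limit along the subsequence $\eps_k$ of Corollary~\ref{subseq}. Setting $x=[\eps^{-1}r]$ and noting that $X$ coincides with a simple symmetric random walk on $\mathbb{Z}$ up to the first hitting time $\tau$ of $\{0,N+1\}$, and that $\rho^{\eps}(y,0)=u_0(\eps y)$ for $y\in\Lambda_N$ while $\rho^{\eps}(0,s)=\tilde\rho^{\eps}_{-}(\eps^2 s)$ and $\rho^{\eps}(N+1,s)=\tilde\rho^{\eps}_{+}(\eps^2 s)$, I would decompose
\begin{align*}
\tilde\rho^{\eps}(r,t)
=\,&E_{x}\big[u_0(\eps X(\eps^{-2}t))\,\mathbf 1_{\tau>\eps^{-2}t}\big]\\
&+E_{x}\big[\tilde\rho^{\eps}_{-}(t-\eps^2\tau)\,\mathbf 1_{\{\tau\le\eps^{-2}t,\,X(\tau)=0\}}\big]\\
&+E_{x}\big[\tilde\rho^{\eps}_{+}(t-\eps^2\tau)\,\mathbf 1_{\{\tau\le\eps^{-2}t,\,X(\tau)=N+1\}}\big],
\end{align*}
and show that each of the three terms converges to the corresponding piece of \eqref{exu} along $\eps_k$.

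For the first, absorbed, term I would invoke Donsker's invariance principle together with the Gaussian local-limit estimates \eqref{LL} already used in Proposition~\ref{uniconv}: the rescaled killed walk $\eps X(\eps^{-2}\cdot)$ converges to Brownian motion $B$ started at $r$ and killed on leaving $(0,1)$. The transition density of this killed diffusion, obtained by the method of images, equals precisely $\Theta(r-r',t)-\Theta(r+r',t)$, which is the very lattice sum defining $\Theta$. A standard Riemann-sum identification on the initial datum $u_0\in C(0,1)$ then matches the limit with the first integral of \eqref{exu}.

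For the two boundary contributions I would use Corollary~\ref{subseq}: along $\eps_k$ the functions $\tilde\rho^{\eps_k}_{\pm}$ converge uniformly to $v_{\pm}$, so $\tilde\rho^{\eps_k}_{\pm}(t-\eps_k^2\tau)$ can be replaced, up to $o(1)$, by $v_{\pm}(t-\eps_k^2\tau)$. The joint law of $(\eps_k^2\tau,X(\tau))$ under $P_x$ converges weakly to that of $(\bar\tau,B(\bar\tau))$ under $B_0=r$, where $\bar\tau$ is the hitting time of $\{0,1\}$ by $B$; differentiating the killed kernel $\Theta(r-r',s)-\Theta(r+r',s)$ in $r'$ at $r'=0$ and $r'=1$ (using the $2$-periodicity of $\Theta$ in its first argument) yields the exit densities $-\partial_r\Theta(r,s)$ at $0$ and $+\partial_r\Theta(r-1,s)$ at $1$. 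The change of variable $s\mapsto t-s$ then produces the two boundary integrals in \eqref{exu}.

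To upgrade pointwise convergence at each $(r,t)$ to the uniform convergence on $[0,T]$ claimed by the theorem I would establish equicontinuity of $\tilde\rho^{\eps}(r,\cdot)$ in $t$ uniformly in $\eps$: this follows from the Hölder bound of Proposition~\ref{uniconv} in the boundary case, and in the bulk from the Gaussian bounds \eqref{LL} applied to $p_t(x,y)$ via the duality identity \eqref{duality*}; Arzel\`a--Ascoli together with the continuity of the right-hand side of \eqref{exu} then gives uniform convergence on $[0,T]$. The main technical difficulty I anticipate is justifying the interchange of limits inside the boundary integrals: the exit density $\partial_r\Theta(r,t-s)$ becomes concentrated at $s$ close to $t$ when $r$ is close to $0$ or $1$, so replacing $\tilde\rho^{\eps_k}_{\pm}(t-\eps_k^2\tau)$ by $v_{\pm}(t-\bar\tau)$ uniformly over the integration range requires the quantitative Hölder control of Proposition~\ref{uniconv} on a shrinking window around $\eps^2\tau=t$.
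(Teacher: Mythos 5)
Your proposal follows essentially the same route as the paper: both decompose $\tilde\rho^{\eps}(r,t)$ via the stopping-time identity of Corollary~3.3, identify the sticky walk with the simple symmetric walk up to the hitting time of $\{0,N+1\}$, pass to the Brownian limit of the killed walk and of the hitting-time/exit-point distributions (the paper phrases this through uniform convergence of the distribution functions $\mathds{F}_{N,x}(\eps^{-2}s),\mathds{G}_{N,x}(\eps^{-2}s)$ to $F_r,G_r$, which is your weak convergence of $(\eps^2\tau,X(\tau))$), substitute the uniform subsequential limits $v_{\pm}$ from Corollary~3.7, and identify the resulting kernels with the image sum $\Theta$. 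Your closing concern about the exit density concentrating near $s=t$ is in fact harmless, since the replacement of $\tilde\rho^{\eps_k}_{\pm}$ by $v_{\pm}$ is in sup norm and the exit measures have total mass at most one, so bounded continuity of $v_{\pm}$ suffices, exactly as in the paper.
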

\begin{proof}
We set
\begin{align*}
\mathbf{F}_{N,x}(s)&=P_x(\tau^X_0\leq s, \tau^X_0<\tau^X_{N+1})\\
\mathbf{G}_{N,x}(s)&=P_x(\tau^X_{N+1}\leq s, \tau^X_0>\tau^X_{N+1}).
\end{align*}

For $x=[\eps^{-1}r]$, let $Y^{\rm sp}$ be a simple symmetric random walk on $\mathbb{Z}$ starting from $x$. By Corollary \ref{stoptime}, we can write 
\begin{align*}
&\tilde{\rho}^{\eps}(r,t)=E_x\big[u_0(\eps X(\eps^{-2}t))\mathbf{1}_{\tau>\eps^{-2}t}\big]+E_x\big[\rho^{\eps}(X(\tau),\eps^{-2}t-\tau)\mathbf{1}_{\tau\leq \eps^{-2}t}\big]\\
=&E_x\big[u_0(\eps Y^{\rm sp}(\eps^{-2}t))\mathbf{1}_{\tau_0^{Y^{\rm sp}}\wedge\tau_{N+1}^{Y^{\rm sp}} >\eps^{-2}t}\big]+\dint_0^{\eps^{-2}t}\rho^{\eps}_-(\eps^{-2}t-s)\,d\mathbf{F}_{N,x}(s)\\
&\hskip6.5cm+\dint_0^{\eps^{-2}t}\rho^{\eps}_+(\eps^{-2}t-s)\,d\mathbf{G}_{N,x}(s)\\
=&E_x\big[u_0(\eps Y^{\rm sp}(\eps^{-2}t))\mathbf{1}_{\tau_0^{Y^{\rm sp}}\wedge\tau_{N+1}^{Y^{\rm sp}} >\eps^{-2}t}\big]-\dint_0^t\rho_-^{\eps}(\eps^{-2}s)d\mathbf{F}_{N,x}(\eps^{-2}(t-s))\\
&\hskip6.5cm+\dint_0^t\rho_+^{\eps}(\eps^{-2}s)d\mathbf{G}_{N,x}(\eps^{-2}(t-s)).
\end{align*}

Notice that for $x\in \bar{\Lambda}_N$, 
$$ \mathbf{F}_{N,x}(s)=P_x(\tau^{Y^{\rm sp}}_0\leq s, \tau^{Y^{\rm sp}}_0<\tau^{Y^{\rm sp}}_{N+1})=:\mathds{F}_{N,x}(s), $$
$$ \mathbf{G}_{N,x}(s)=P_x(\tau^{Y^{\rm sp}}_{N+1}\leq s, \tau^{Y^{\rm sp}}_0>\tau^{Y^{\rm sp}}_{N+1})=:\mathds{G}_{N,x}(s). $$

Recall that the sequence of rescaled random walks $\eps Y^{\rm sp}(\eps^{-2}t)$ converges uniformly in distribution on compact intervals of $[0,\infty)$ to a Brownian motion $B, B_0=r\in [0,1]$, defined on some rich enough common probability space $(\tilde{\Omega}, \mathcal{F}, P)$. Moreover, it can be shown that the sequences $\eps^2\tau_0^{Y^{\rm sp}}$ and $\eps^2\tau_{N+1}^{Y^{\rm sp}}$ converge to $\tau_0^B$ and $\tau_1^B$, respectively, in distribution. Then the corresponding sequences of distributions $\mathds{F}_{N,x}(\eps^{-2}s), \mathds{G}_{N,x}(\eps^{-2}s)$ converge uniformly to $F_r(s), G_r(s)$, respectively, as $\eps\to 0$, where 
\begin{align*}
F_r(s)&=P_r(\tau^B_0\leq s, \tau^B_0<\tau^B_1)\\
G_r(s)&=P_r(\tau^B_1\leq s, \tau^B_0>\tau^B_1).
\end{align*}

On the other hand, by Corollary \ref{subseq}, there exist subsequences $\tilde{\rho}_{\pm}^{\eps_k}$ that converge uniformly to $v_{\pm}$ respectively. From this uniform convergence and Proposition \ref{uniconv}, we also deduce that $v_{\pm}\in C([0,\infty))$. 

Therefore, the subsequence $\tilde{\rho}^{\eps_k}$ converges to $u$ uniformly on $[0,T]$, for any $r\in [0,1],T>0$, where the hydrodynamic limit $u$ is given by
\begin{align}
 u(r,t)=&\dint_0^1 u_0(r')\,P_r(B(t)=r', \tau^B_0\wedge \tau_1^B>t)\,dr'\notag\\
&-\dint_0^t v_-(s)dF_r(t-s)+\dint_0^t v_+(s)dG_r(t-s).\label{u}
\end{align}

For $r\in(0,1)$ and $t>0$,
\begin{align}
 u(r,t)=&\dint_0^1u_0(r')[\Theta(r-r',t)-\Theta(r+r',t)]\,dr'\notag\\
&-\dint_{[0,t)}\dfrac{\partial \Theta}{\partial r}(r,t-s)v_-(s)\,ds+\dint_{[0,t)}\dfrac{\partial \Theta}{\partial r}(r-1,t-s)v_+(s)\,ds.\label{exu}
\end{align}
\end{proof}

It can be directly verified that $u\in C^{2,1}((0,1)\times (0,\infty))$. Moreover, the fact that $u$ satisfies the linear heat equation \eqref{2.9} with boundary values $v_{\pm}$ and initial datum $u_0$ is already shown in Chapter 6 of \cite{C}.
\subsection{Identification of the limit functions $v_{\pm}$}\label{bdrylimit}
The conservation of mass implies that the flux at $0$ and $1$ equals the density change in the left and right reservoir respectively, we thus expected that
\begin{equation}\label{dot}
\dfrac{1}{2}u_r(0,t)=\dfrac{d}{dt}v_-(t), \quad\quad\quad -\dfrac{1}{2}u_r(1,t)=\dfrac{d}{dt}v_+(t).
\end{equation}

We will eventually prove \eqref{dot}, but this will require several intermediate steps because at this point we do not know whether the left and right hand sides of \eqref{dot} are well-defined. In Proposition \ref{3.8} below, we will show \eqref{dot} in a weak form.
\begin{prop}\label{3.8}
For any $l\in [0,1]$ and $t,t_0\in (0,\infty)$,
\begin{align}
\dint_0^l u(r,t)\,dr-\dint_0^l u(r,t_0)\,dr&=\dint_{t_0}^t \dfrac{1}{2}u_r(l,s)\,ds+v_-(t_0)-v_-(t)\label{left}\\
\dint_l^1 u(r,t)\,dr-\dint_l^1 u(r,t_0)\,dr&=\dint_{t_0}^t -\dfrac{1}{2}u_r(l,s)\,ds+v_+(t_0)-v_+(t).\label{right}
\end{align}
\end{prop}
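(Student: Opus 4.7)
My plan is to derive \eqref{left} as a discrete conservation law at the particle-system level and then pass to the hydrodynamic limit along the subsequence $\eps_k$ from Corollary \ref{subseq}; identity \eqref{right} follows by a symmetric argument on the right half. Throughout, fix $l \in (0,1)$ and set $K = [lN]$; the endpoints $l \in \{0,1\}$ are interpreted in the limiting sense.

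The key microscopic identity is obtained by summing \eqref{2.6} over $x = 1, \ldots, K$. The right-hand side telescopes into the difference of the two discrete gradients $\frac{1}{2}[\rho^\eps(K+1, t) - \rho^\eps(K, t)]$ and $\frac{1}{2}[\rho^\eps(1, t) - \rho^\eps(0, t)]$. Using \eqref{2.7} with $M = N = \eps^{-1}$, the left-boundary gradient equals $\eps^{-1}\frac{d}{dt}\rho^\eps_-(t)$, so multiplying through by $\eps$ and rearranging produces the conservation form
\begin{equation*}
\frac{d}{dt}\Bigl[\eps \sum_{x=1}^K \rho^\eps(x, t) + \rho^\eps_-(t)\Bigr] = \frac{\eps}{2}\bigl[\rho^\eps(K+1, t) - \rho^\eps(K, t)\bigr].
\end{equation*}
Integrating from $\eps^{-2} t_0$ to $\eps^{-2} t$ and switching to the rescaled variables then gives
\begin{equation*}
\eps \sum_{x=1}^K \bigl[\tilde{\rho}^\eps(\eps x, t) - \tilde{\rho}^\eps(\eps x, t_0)\bigr] + \bigl[\tilde{\rho}^\eps_-(t) - \tilde{\rho}^\eps_-(t_0)\bigr] = \int_{t_0}^{t} \frac{\tilde{\rho}^\eps(\eps(K+1), \sigma) - \tilde{\rho}^\eps(\eps K, \sigma)}{2\eps}\,d\sigma.
\end{equation*}

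Next I would pass to the limit along $\eps_k \to 0$. Corollary \ref{subseq} delivers $\tilde{\rho}^{\eps_k}_-(t) \to v_-(t)$, and the Riemann sums converge to $\int_0^l u(r, \cdot)\,dr$ by dominated convergence, using $0 \leq \tilde{\rho}^\eps \leq 1$ and the pointwise convergence of $\tilde{\rho}^{\eps_k}$ to $u$ from the previous subsection. On the right-hand side, I expect pointwise convergence of the difference quotient to $\frac{1}{2}u_r(l, \sigma)$ for each $\sigma > 0$; combined with a uniform integrable dominator on $[t_0, t]$ this produces the integral $\int_{t_0}^{t} \frac{1}{2}u_r(l, \sigma)\,d\sigma$, and rearrangement yields \eqref{left}.

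The main obstacle is precisely the convergence and uniform control of the discrete spatial gradient $\eps^{-1}[\tilde{\rho}^\eps(\eps(K+1), \sigma) - \tilde{\rho}^\eps(\eps K, \sigma)]$, since bare pointwise convergence of $\tilde{\rho}^\eps$ to $u$ does not transfer to derivatives. I would handle this by writing a discrete Duhamel representation of $\tilde{\rho}^\eps$ that mirrors \eqref{exu}, based on the sticky-walk transition density $p_\sigma$ and the boundary traces $\tilde{\rho}^\eps_\pm$, and then taking a discrete spatial difference. The local CLT estimates \eqref{LL}, together with the smoothness of $\Theta$ and its derivatives at the interior point $l \in (0, 1)$, should supply both the pointwise convergence to $\frac{1}{2} u_r(l, \sigma)$ and the $L^1_\sigma$ majorant on $[t_0, t]$ needed to invoke dominated convergence and conclude.
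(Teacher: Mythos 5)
Your discrete conservation law is correct and is in fact the same starting point as the paper's proof: summing \eqref{2.6} over $x=1,\dots,K$ and absorbing the boundary term via \eqref{2.7} (with $M=N=\eps^{-1}$) is exactly how the identity is set up there. The problem is that the decisive step---convergence of the microscopic flux $\frac{1}{2\eps}\big[\tilde\rho^{\eps}(\eps(K{+}1),\sigma)-\tilde\rho^{\eps}(\eps K,\sigma)\big]$ to $\frac12 u_r(l,\sigma)$ with a uniform $L^1_\sigma$ dominator---is precisely what your proposal does not prove. You rightly flag it as the obstacle, but the remedy you sketch (discrete Duhamel representation plus the local CLT) is only a program: the estimate \eqref{LL} controls the kernel itself, not its spatial increments at lattice scale, and what you would need is a gain of a factor $\eps$ when differencing in $x$ both the killed kernel and the hitting-time laws $\mathds{F}_{N,x}$, $\mathds{G}_{N,x}$, i.e.\ a gradient local CLT for the walk absorbed at $\{0,N+1\}$, uniformly over $\sigma\in[t_0,t]$ and over $\eps$. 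Nothing of this sort is established in the paper or in your sketch, and it is substantially harder than any estimate actually used in the argument; ``should supply'' is doing all the work. As written, the proof therefore has a genuine gap at its central step.

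The paper closes this gap not by proving gradient estimates but by avoiding them with a two-scale averaging trick, and it is worth seeing how cheap the fix is. Instead of summing up to a single site $K=[lN]$, one averages the partial sums over a window of $H=\eps^{-1}h$ sites, setting $\psi_{\eps,h}(l,t)=\frac1H\sum_{x=L-H+1}^{L}\eps\sum_{y=1}^{x}\rho^{\eps}(y,\eps^{-2}t)$ with $L=\eps^{-1}l$. The flux then telescopes a second time, in $x$, and becomes the \emph{macroscopic} difference quotient $\frac{1}{2h}\big[\rho^{\eps}(L{+}1,\eps^{-2}s)-\rho^{\eps}(L{-}H{+}1,\eps^{-2}s)\big]$, which at fixed $h>0$ is a continuous functional of the density field itself. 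Passing $\eps_k\to0$ then uses only the convergence already in hand (Corollary \ref{subseq} together with the representation behind \eqref{u}), yielding $\frac{1}{2h}\big[u(l,s)-u(l-h,s)\big]$, and the subsequent limit $h\to0$ uses only the interior smoothness of the explicit formula \eqref{exu}; the windowing perturbs the left-hand side by at most $O(h)$, as the paper checks directly. If you replace your single-point flux by this $h$-window average, your conservation-law argument closes with the tools you already have, and no discrete regularity theory is needed.
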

\begin{proof}
Call $L=\eps^{-1}l, H=\eps^{-1}h$. We consider the average
$$ \psi_{\eps,h}(l,t):=\dfrac{1}{H}\dsum_{x=L-H+1}^L\,\eps\dsum_{y=1}^x\,\rho^{\eps}(y,\eps^{-2}t). $$

Let us recall the differential system \eqref{2.6}, \eqref{2.7}, then for any $t,t_0>0$, the difference $\psi_{\eps,h}(l,t)-\psi_{\eps,h}(l,t_0)$ can be written as follows.
\begin{align*}
&\psi_{\eps,h}(l,t)-\psi_{\eps,h}(l,t_0)\\
=&\dfrac{1}{H}\dsum_{x=L-H+1}^L\eps\dint_{\eps^{-2}t_0}^{\eps^{-2}t}\dsum_{y=1}^x\dfrac{d}{ds}\rho^{\eps}(y,s)\,ds\\
=&\dfrac{1}{H}\dsum_{x=L-H+1}^L\eps\dint_{t_0}^t\eps^{-2}\dfrac{1}{2}\big[\rho^{\eps}(x+1,\eps^{-2}s)-\rho^{\eps}(x,\eps^{-2}s)+\rho^{\eps}(0,\eps^{-2}s)-\rho^{\eps}(1,\eps^{-2}s)\big]\,ds\\
=&\dint_{t_0}^t\dfrac{1}{2}\dfrac{\rho^{\eps}(L+1,\eps^{-2}s)-\rho^{\eps}(L-H+1,\eps^{-2}s)}{h}\,ds+\rho^{\eps}_-(\eps^{-2}t_0)-\rho^{\eps}_-(\eps^{-2}t).
\end{align*}

Taking the limit along the subsequence $\rho^{\eps_k}$ and then $h\to 0$ of the right hand side of the last equality will give us the right hand side of \eqref{left}. On the other hand, since 
\begin{align*}
\psi_{\eps,h}(l,t)&=\dfrac{\eps}{H}\bigg(H\dsum_{x=1}^{L-H+1}\rho^{\eps}(x,\eps^{-2}t)+\dsum_{y=L-H+2}^L\,(L+1-y)\rho^{\eps}(y,\eps^{-2}t)\bigg)\\
&\leq \eps\dsum_{x=1}^{L-H+1}\rho^{\eps}(x,\eps^{-2}t)+\dfrac{\eps}{H}\dsum_{y=L-H+2}^L\,(L+1-y)\\
&\leq \eps\dsum_{x=1}^{L-H+1}\rho^{\eps}(x,\eps^{-2}t)+\dfrac{h-\eps}{2},
\end{align*}
then along the subsequence $\rho^{\eps_k}$ we obtain
$$ \lim_{h\to 0}\lim_{\eps\to 0}\psi_{\eps,h}(l,t)-\psi_{\eps,h}(l,t_0)=\dint_0^lu(r,t)\,dr-\dint_0^lu(r,t_0)\,dr. $$

It implies \eqref{left}. To derive \eqref{right}, we apply the same argument as before by considering the average
$$ \phi_{\eps,h}(l,t):=\dfrac{1}{H}\dsum_{x=L+1}^{L+H}\,\eps\dsum_{y=x+1}^N\,\rho^{\eps}(y,\eps^{-2}t). $$
\end{proof}

Now taking the limit $t_0\to 0, l\to 0$ in \eqref{left} and $t_0\to 0, l\to 1$ in \eqref{right} gives us the boundary conditions
\begin{equation}\label{bdry}
\begin{cases}
v_-(t)&=v_{0,-}+\dlim_{l\to 0}\dlim_{t_0\to 0}\dint_{t_0}^t \dfrac{1}{2}u_r(l,s)\,ds\\
v_+(t)&=v_{0,+}-\dlim_{l\to 1}\dlim_{t_0\to 0}\dint_{t_0}^t \dfrac{1}{2}u_r(l,s)\,ds.
\end{cases}
\end{equation}

Since $\dfrac{\partial^2\Theta}{\partial r^2}=-2\dfrac{\partial\Theta}{\partial s}$, it implies that
\begin{align*}
u_r(r,t)=&\dint_0^1u_0(r')\Bigg[\dfrac{\partial\Theta}{\partial r}(r-r',t)-\dfrac{\partial\Theta}{\partial r}(r+r',t)\Bigg]dr'\\
&+\dint_{[0,t)}2\dfrac{\partial \Theta}{\partial s}(r,t-s)v_-(s)\,ds-\dint_{[0,t)}2\dfrac{\partial \Theta}{\partial s}(r-1,t-s)v_+(s)\,ds.
\end{align*}

Then
\begin{align*}
 \dint_{t_0}^t\dfrac{1}{2}u_r(l,s)\,ds=&\dint_{t_0}^t\dfrac{1}{2}\dint_0^1u_0(r')\Bigg[\dfrac{\partial\Theta}{\partial r}(l-r',s)-\dfrac{\partial\Theta}{\partial r}(l+r',s)\Bigg]dr'ds\\
&-\dint_{[0,t)}\Theta(l,t-s)v_-(s)\,ds+\dint_0^{t_0}\Theta(l,t_0-s)v_-(s)\,ds\\
&+\dint_{[0,t)}\Theta(l-1,t-s)v_+(s)\,ds-\dint_0^{t_0}\Theta(l-1,t_0-s)v_+(s)\,ds.
\end{align*}

Hence, by \eqref{bdry}, it follows that
\begin{align}
v_-(t)=v_{0,-}+&\dlim_{l\to 0}\dlim_{t_0\to 0}\dint_{t_0}^t\dfrac{1}{2}\dint_0^1u_0(r')\Bigg[\dfrac{\partial\Theta}{\partial r}(l-r',s)-\dfrac{\partial\Theta}{\partial r}(l+r',s)\Bigg]dr'ds\notag\\
&-\dint_{[0,t)}\Theta(0,t-s)v_-(s)\,ds+\dint_{[0,t)}\Theta(1,t-s)v_+(s)\,ds,\label{first}
\end{align}
and similarly,
\begin{align}
v_+(t)=v_{0,+}-&\dlim_{l\to 1}\dlim_{t_0\to 0}\dint_{t_0}^t\dfrac{1}{2}\dint_0^1u_0(r')\Bigg[\dfrac{\partial\Theta}{\partial r}(l-r',s)-\dfrac{\partial\Theta}{\partial r}(l+r',s)\Bigg]dr'ds\notag\\
&+\dint_{[0,t)}\Theta(1,t-s)v_-(s)\,ds-\dint_{[0,t)}\Theta(0,t-s)v_+(s)\,ds.\label{second}
\end{align}
\subsection{Continuous differentiability of $v_{\pm}$}
The idea to prove the continuous differentiability of $v_{\pm}$ comes from \cite{MF}. 
We first set
$$f_{\pm}(t)=v_{0,\pm}\mp\dlim_{l\to 0}\dlim_{t_0\to 0}\dint_{t_0}^t\dfrac{1}{2}\dint_0^1u_0(r')\Bigg[\dfrac{\partial\Theta}{\partial r}(l-r',s)-\dfrac{\partial\Theta}{\partial r}(l+r',s)\Bigg]dr'ds.$$

It is easy to obtain that
\begin{align*}
f'_-(t)=&-\dint_0^1u_0(r')\dfrac{\partial\Theta}{\partial r'}(r',t)dr'\\
f'_+(t)=&\dint_0^1u_0(r')\dfrac{\partial\Theta}{\partial r'}(1-r',t)dr'.
\end{align*}

Let us consider the equation
\begin{align}\label{Vol}
\begin{bmatrix}
V_-(t)\\ 
V_+(t)\\ 
\end{bmatrix}=&\begin{bmatrix}
f_-'(t)+K_-(t)v_{0,-}+K_+(t)v_{0,+}\\ 
f_+'(t)+K_+(t)v_{0,-}+K_-(t)v_{0,+}\\ 
\end{bmatrix}\notag\\
&\quad\quad+\dint_0^t \begin{bmatrix}
K_-(t-s)&K_+(t-s)\\ 
K_+(t-s)&K_-(t-s)\\ 
\end{bmatrix}\begin{bmatrix}
V_-(s)\\ 
V_+(s)\\ 
\end{bmatrix}ds,
\end{align}
where $K_-(t)=-\Theta(0,t), K_+(t)=\Theta(1,t)$.
\begin{prop}\label{Vol1}
The equation \eqref{Vol} has a unique solution $(V_-,V_+)\in C(0,T]\times C(0,T]$ for any $T>0$.
\end{prop}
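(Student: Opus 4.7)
The plan is to view \eqref{Vol} as a linear Volterra integral equation of the second kind, $\mathbf V=\mathbf g+\mathbf K*\mathbf V$, where $\mathbf V=(V_-,V_+)^{T}$, $\mathbf g$ is the inhomogeneous term appearing on the right-hand side of \eqref{Vol}, and $\mathbf K(t)$ is the $2\times 2$ matrix with entries $K_\pm$. I would then solve it by Picard iteration / Neumann series in the weakly singular Volterra setting.

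First I would analyze the singularities at $t=0$. From the periodic heat-kernel series defining $\Theta$, one reads off that $\Theta(0,t)\sim (2\pi t)^{-1/2}$ and $\Theta(1,t)=O(e^{-1/(2t)})$ as $t\downarrow 0$, so $K_-$ carries only an integrable $t^{-1/2}$ singularity while $K_+$ is smooth up to $t=0$; in particular $\mathbf K\in L^1(0,T)$. For the forcing, an integration by parts in $f'_-(t)=-\int_0^1 u_0(r')\,\partial_{r'}\Theta(r',t)\,dr'$, legitimate since $u_0\in C^1$, yields
\[
g_-(t)=(u_0(0)-v_{0,-})\,\Theta(0,t)+(v_{0,+}-u_0(1))\,\Theta(1,t)+\int_0^1 u_0'(r')\,\Theta(r',t)\,dr',
\]
and analogously for $g_+$; each $g_\pm$ is therefore continuous on $(0,T]$ with at worst a $t^{-1/2}$ singularity at the origin, hence in $L^1(0,T)$.

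Next I would run Picard iteration. The crucial convolution estimate is the classical fact that iterating a kernel of order $t^{-1/2}$ produces $\mathbf K^{*n}(t)$ of order $t^{n/2-1}\,\Gamma(1/2)^n/\Gamma(n/2)$, so the Neumann series $\mathbf V=\sum_{n\ge 0}\mathbf K^{*n}*\mathbf g$ converges absolutely and uniformly on each $[\delta,T]\subset(0,T]$. Equivalently, on the Banach space of $\mathbb R^2$-valued functions continuous on $(0,T]$ with $t^{1/2}|\mathbf V(t)|$ bounded, equipped with the weighted norm $\|\mathbf V\|_\beta:=\sup_{t\in(0,T]} e^{-\beta t}\,t^{1/2}\,|\mathbf V(t)|$, the map $\mathbf V\mapsto \mathbf g+\mathbf K*\mathbf V$ is a strict contraction for $\beta$ sufficiently large, and Banach's fixed point theorem delivers a unique fixed point. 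Continuity on $(0,T]$ then follows from continuity of $\mathbf g$ there together with dominated convergence in the convolution, and uniqueness in $C((0,T],\mathbb R^2)$ is automatic from the contraction.

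The main obstacle is precisely the $t^{-1/2}$ singularity of $K_-$ and of $g_\pm$ at $t=0$, which rules out a naive $C([0,T])$ contraction and forces either the weighted-norm or the iterated-kernel bookkeeping sketched above, with the factorial decay $\Gamma(1/2)^n/\Gamma(n/2)$ produced by the Beta-function identity for convolving power singularities. A secondary point is checking that the fixed point in the weighted space indeed lies in $C(0,T]\times C(0,T]$. A useful simplification is that the system decouples via $U:=V_-+V_+$ and $W:=V_--V_+$ into two scalar Volterra equations with kernels $K_-\pm K_+$, reducing matters to two independent scalar weakly singular Volterra equations for which the resolvent theory is textbook.
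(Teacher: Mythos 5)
Your proposal is correct in substance but handles the singularity at $t=0$ differently from the paper. The paper treats \eqref{Vol} with the classical $L^1$-kernel continuation method: it shifts time by an arbitrary $\delta>0$ (absorbing the contribution of $(0,\delta]$ into a continuous forcing term $J_\pm$), chooses $S=T/I$ so small that $\int_0^S|K_\pm|\,dt<\tfrac{1}{2\sqrt 2}$, runs a contraction on $C[0,S]\times C[0,S]$ with the sup norm, and then extends by induction over the intervals $(iS,(i+1)S)$ before letting $\delta\downarrow 0$. You instead work globally on $(0,T]$ in a weighted space encoding the precise $t^{-1/2}$ blow-up, using either the exponentially weighted norm $\sup_t e^{-\beta t}t^{1/2}|\mathbf V(t)|$ or the Neumann series with the Beta-function bound $|\mathbf K|^{*n}(t)\le C^n t^{n/2-1}\Gamma(1/2)^n/\Gamma(n/2)$. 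What your route buys: it constructs the solution in one shot (no $\delta$-shift, no interval-by-interval induction), and it yields the quantitative bound $|V_\pm(t)|\le Ct^{-1/2}$ near $t=0$, which the paper's argument does not produce. What the paper's route buys: it needs only $K_\pm\in L^1(0,T)\cap C[\delta,T]$ and never uses the explicit $t^{-1/2}$ rate, and it works directly with the continuity of the forcing after the shift; it is also closer in spirit to the resolvent machinery of Miller--Feldstein \cite{MF} that the paper reuses in Proposition \ref{C1}. Your observation that the symmetric kernel matrix diagonalizes via $U=V_-+V_+$, $W=V_--V_+$ into two scalar equations with kernels $K_-\pm K_+$ is a genuine simplification absent from the paper (the paper itself exploits the sum $\bar v_-+\bar v_+$ only later, in the uniqueness argument for the free boundary problem).

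Two caveats. First, your integration by parts uses $u_0\in C^1$, but Theorem \ref{PDE} (in whose proof this proposition sits) assumes only $u_0\in C(0,1)$; the hypothesis is unnecessary, since $|f'_\pm(t)|\le \|u_0\|_\infty\int_0^1|\partial_{r'}\Theta(r',t)|\,dr'=O(t^{-1/2})$ directly, which is all your scheme needs. Second, your claim that ``uniqueness in $C((0,T],\mathbb R^2)$ is automatic from the contraction'' is slightly too quick: the contraction gives uniqueness only within the weighted class $t^{1/2}|\mathbf V(t)|$ bounded, whereas $C(0,T]$ admits solutions with worse (but still integrable) blow-up for which \eqref{Vol} makes sense. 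This is repaired by one more application of your iterated-kernel bound: for any solution $\mathbf V\in C(0,T]\cap L^1(0,T)$, iterating $|\mathbf V|\le|\mathbf g|+|\mathbf K|*|\mathbf V|$ gives $|\mathbf V|\le\sum_{n<N}|\mathbf K|^{*n}*|\mathbf g|+|\mathbf K|^{*N}*|\mathbf V|$, and $\sup_{[0,T]}|\mathbf K|^{*N}\to 0$ as $N\to\infty$ forces the remainder to vanish, so every $C(0,T]\cap L^1$ solution coincides with the Neumann-series solution. With these two adjustments your argument is a complete and self-contained proof of Proposition \ref{Vol1}.
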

\begin{proof}
For any $T>0$, let us fix $\delta\in(0,T)$ and set $\mathds{V}_{\pm}(t)=V_{\pm}(t+\delta)$. We claim that $\mathds{V}_{\pm}\in C[0,T-\delta]$. Indeed, we can easily check that the functions $f'_{\pm}, K_{\pm}$ belong to $C[\delta,T]\cap L^1(0,T)$. Moreover,
\begin{align}
V_{\pm}(t+\delta)=&f'_{\pm}(t+\delta)+K_{\pm}(t+\delta)v_{0,-}+K_{\mp}(t+\delta)v_{0,+}+J_{\pm}(t)\notag\\
&+\dint_0^t\big[K_{\pm}(t-s)V_-(s+\delta)+K_{\mp}(t-s)V_+(s+\delta)\big]\,ds,\label{third}
\end{align}
where $J_{\pm}(t)=\dint_0^{\delta}\big[K_{\pm}(t+\delta-s)V_-(s)+K_{\mp}(t+\delta-s)V_+(s)\big]\,ds$ are continuous in $t\in[0,T-\delta]$.

 Then \eqref{third} can be rewritten as
$$ \mathds{V}_{\pm}(t)=\mathds{F}_{\pm}(t)+\dint_0^t\big[K_{\pm}(t-s)\mathds{V}_-(s)+K_{\mp}(t-s)\mathds{V}_+(s)\big]\,ds $$
where $\mathds{F}_{\pm}$ are continuous functions on $[0,T-\delta]$.

Let us denote $\mathds{V}(t)=(\mathds{V}_-(t),\mathds{V}_+(t))$ and define the operator $T$ by
$$ T\mathds{V}(t):=\begin{bmatrix}
\mathds{F}_-(t)\\ 
\mathds{F}_+(t)\\ 
\end{bmatrix}+\dint_0^t\begin{bmatrix}
K_-(t-s)&K_+(t-s)\\ 
K_+(t-s)&K_-(t-s)\\ 
\end{bmatrix}\mathds{V}(s)\,ds. $$

Since $K_{\pm}\in L^1(0,T)$, it allows us to choose an integer $I$ such that if $S=T/I$ then 
$$ \dint_0^S|K_{\pm}(t)|\,dt=\alpha_{\pm}<\dfrac{1}{2\sqrt{2}}. $$

If $\mathds{V}\in C[0,S]\times C[0,S]$, we can verify $T\mathds{V}\in C[0,S]\times C[0,S]$ by noting the fact that $\mathds{F}_{\pm}\in C[0,T-\delta]$ and $K_{\pm}\in C[\delta,T]$.

Moreover, $T$ is a contraction mapping on $C[0,S]\times C[0,S]$. Hence, by the contraction mapping theorem, there exists a unique $\mathds{V}\in C[0,S]\times C[0,S]$ such that $T\mathds{V}=\mathds{V}$.  

Next, let us define $\mathds{V}^{(1)}(t)=(\mathds{V}^{(1)}_-(t),\mathds{V}^{(1)}_+(t))$, where $\mathds{V}_{\pm}^{(1)}(t)=\mathds{V}_{\pm}(t+S)$. Then
$$ \mathds{V}^{(1)}_{\pm}(t)=\mathds{F}^{(1)}_{\pm}(t)+\dint_0^t\big[K_{\pm}(t-s)\mathds{V}^{(1)}_-(s)+K_{\mp}(t-s)\mathds{V}^{(1)}_+(s)\big]\,ds, $$
where
$$ \mathds{F}^{(1)}_{\pm}(t)=\mathds{F}_{\pm}(t+S)+\dint_0^S\big[K_{\pm}(t+S-s)\mathds{V}_-(s)+K_{\mp}(t+S-s)\mathds{V}_+(s)\big]\,ds. $$

Then the contraction mapping argument as before enables us to attain that $\mathds{V}^{(1)}\in C[0,S]\times C[0,S]$. By induction on the intervals $(iS, (i+1)S)$, we obtain that $\mathds{V}_{\pm}\in C[0,T-\delta]$. This completes the proof of our beginning claim.

 Since $\delta>0$ is arbitrary, the claim implies that $V_{\pm}\in C(0,T]$.
\end{proof}
\begin{prop}\label{C1}
The solutions $v_{\pm}$ to the system \eqref{first} and \eqref{second} belong to $C^1(0,\infty)$. Moreover, for any $T>0$, $v'_{\pm}(t)=V_{\pm}(t), \forall t\in (0,T]$, where $(V_-,V_+)$ is the unique solution to \eqref{Vol}.
\end{prop}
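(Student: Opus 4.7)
My plan is to define
\[
\tilde v_{\pm}(t) := v_{0,\pm} + \int_0^t V_{\pm}(s)\,ds,
\]
with $(V_-,V_+)$ the unique solution of \eqref{Vol} supplied by Proposition \ref{Vol1}, and to show that $\tilde v_{\pm}$ satisfies the very same integral equations \eqref{first}--\eqref{second} as $v_\pm$. Uniqueness of continuous solutions of \eqref{first}--\eqref{second} (with prescribed initial data $v_{0,\pm}$) will then force $v_\pm\equiv\tilde v_\pm$; since $\tilde v_\pm$ is by construction the indefinite integral of a function continuous on $(0,T]$, this simultaneously gives $v_\pm\in C^1(0,\infty)$ and $v_\pm'=V_\pm$.

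\noindent\textbf{From \eqref{Vol} back to \eqref{first}--\eqref{second}.} Integrating \eqref{Vol} from $0$ to $t$, applying Fubini to each iterated integral and using $f_\pm(0)=v_{0,\pm}$ together with $\mathcal K_\pm(t):=\int_0^t K_\pm(u)\,du$, one obtains
\[
\tilde v_-(t) = f_-(t) + v_{0,-}\mathcal K_-(t) + v_{0,+}\mathcal K_+(t) + \int_0^t V_-(s)\mathcal K_-(t-s)\,ds + \int_0^t V_+(s)\mathcal K_+(t-s)\,ds,
\]
together with the corresponding identity for $\tilde v_+$. Conversely, substituting $\tilde v_\pm(s)=v_{0,\pm}+\int_0^s V_\pm(\sigma)\,d\sigma$ into the right-hand side of \eqref{first} and switching the order of integration once more produces exactly the display above. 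Hence $\tilde v_\pm$ satisfies \eqref{first}--\eqref{second}.

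\noindent\textbf{Uniqueness.} If two continuous solutions of \eqref{first}--\eqref{second} share the initial values $v_{0,\pm}$, their difference $d_\pm$ satisfies the homogeneous Volterra system $d_-(t)=\int_0^t[K_-(t-s)d_-(s)+K_+(t-s)d_+(s)]\,ds$ and its $d_+$ analogue. Choosing $S>0$ with $\int_0^S(|K_-|+|K_+|)<1/2$, exactly as in Proposition \ref{Vol1}, a contraction-mapping argument on $C([0,S])\times C([0,S])$ forces $d_\pm\equiv 0$ on $[0,S]$, and induction over the intervals $[iS,(i+1)S]$ propagates the vanishing to all of $[0,T]$.

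\noindent\textbf{Main obstacle.} The principal technical difficulty, both in the Fubini step and in the uniqueness step, is the singularity $K_-(t)\sim -(2\pi t)^{-1/2}$ near $t=0$: the two Fubini interchanges must first be performed on $[\delta,t]$, where $V_\pm$ is bounded and continuous by Proposition \ref{Vol1}, and only afterwards one passes $\delta\downarrow 0$, using the local integrability of $K_\pm$ and the integrability of $V_\pm$ near $0$ extracted directly from the fixed-point equation \eqref{Vol}. Once these limit interchanges are justified, the identification $v_\pm=\tilde v_\pm$ and the identity $v_\pm'=V_\pm$ hold on every $(0,T]$ and hence on $(0,\infty)$.
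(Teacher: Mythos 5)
Your proposal is correct, but it takes a genuinely different route from the paper. The paper argues in the style of Miller--Feldstein \cite{MF}: it forms the difference quotients $Q_{\pm}(t,h)=\big(v_{\pm}(t+h)-v_{\pm}(t)\big)/h$, shows they satisfy the same Volterra system as \eqref{Vol} with perturbed forcing, and controls $D_{\pm}=Q_{\pm}-V_{\pm}$ via the resolvent kernel $\mathcal{R}$ of $|K_-|+|K_+|$ (with $\mathcal{R}\in L^1$, $\mathcal{R}\geq 0$ by Lemma 1 of \cite{MF}), obtaining $Q_{\pm}\to V_{\pm}$ uniformly on $[\delta,T-\delta]$ and treating the left derivative separately by equicontinuity. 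You instead integrate \eqref{Vol}, check via Fubini that $\tilde v_{\pm}=v_{0,\pm}+\int_0^{\cdot}V_{\pm}$ solves \eqref{first}--\eqref{second}, and conclude by uniqueness; this is more elementary, yields both one-sided derivatives at once, and involves no circularity (the paper proves the needed uniqueness independently, by Gronwall, in the next subsection). The one soft spot is your claim that integrability of $V_{\pm}$ at $0$ comes \emph{directly} from the fixed-point equation: Proposition \ref{Vol1} only gives $V_{\pm}\in C(0,T]$, which does not exclude a non-integrable singularity at $0$. You should close this by noting that finiteness of the convolutions in \eqref{Vol} forces local integrability, that the forcing $f'_{\pm}+K_{\pm}v_{0,-}+K_{\mp}v_{0,+}$ is $O(t^{-1/2})\in L^1(0,T)$, and then the resolvent (or iterated Gronwall) bound gives $V_{\pm}\in L^1(0,T)$; alternatively, rerun the contraction of Proposition \ref{Vol1} in $L^1(0,S)$ using Young's inequality and identify that solution with the continuous one. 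With that supplied, your Fubini interchanges and the passage $\delta\downarrow 0$ are justified and the proof is complete.
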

\begin{proof}
For any $T>0$, we fix any $\delta\in (0,T/2)$ and define
$$ Q_{\pm}(t,h)=\dfrac{v_{\pm}(t+h)-v_{\pm}(t)}{h}, $$
for $h\in (0,\delta], t\in (0,T-\delta]$. Since $v_{\pm}$ solve \eqref{first} and \eqref{second}, then $Q_{\pm}$ satisfy
$$Q_{\pm}(t,h)=\mathcal{G}_{\pm}(t,h)+\dint_0^t[K_{\pm}(t-s)Q_-(s,h)+K_{\mp}(t-s)Q_+(s,h)]\,ds$$
where 
$$\mathcal{G}_{\pm}(t,h)=\dfrac{f_{\pm}(t+h)-f_{\pm}(t)}{h}+\dfrac{1}{h}\dint_t^{t+h}[K_{\pm}(s)v_-(t+h-s)+K_{\mp}(s)v_+(t+h-s)]\,ds.$$

Set $\mathcal{F}_{\pm}(t)=f_{\pm}'(t)+K_{\pm}(t)v_{0,-}+K_{\mp}(t)v_{0,+}$. Therefore, the differences $D_{\pm}(t,h)=Q_{\pm}(t,h)-V_{\pm}(t)$ satisfy
$$D_{\pm}(t,h)=\mathcal{G}_{\pm}(t,h)-\mathcal{F}_{\pm}(t)+\dint_0^t[K_{\pm}(t-s)D_-(s,h)+K_{\mp}(t-s)D_+(s,h)]\,ds.$$

Then
\begin{align*}
&|D_-(t,h)|+|D_+(t,h)|\\
\leq &|\mathcal{G}_-(t+h)-\mathcal{F}_-(t)|+|\mathcal{G}_+(t+h)-\mathcal{F}_+(t)|\\
&+\dint_0^t\big[|K_-(t-s)|+|K_+(t-s)|\big]\,\big[|D_-(s,h)|+|D_+(s,h)|\big]\,ds. 
\end{align*}

Call $q(t,h)$ the nonnegative function such that
\begin{align*}
&|D_-(t,h)|+|D_+(t,h)|\\
=&\mathcal{E}(t,h)+\dint_0^t\big[|K_-(t-s)|+|K_+(t-s)|\big]\,\big[|D_-(s,h)|+|D_+(s,h)|\big]\,ds, 
\end{align*}
where $\mathcal{E}(t,h):=|\mathcal{G}_-(t+h)-\mathcal{F}_-(t)|+|\mathcal{G}_+(t+h)-\mathcal{F}_+(t)|-q(t,h)$.

Then $|D_-(t,h)|+|D_+(t,h)|$ can be represented as
$$|D_-(t,h)|+|D_+(t,h)|=\mathcal{E}(t,h)+\dint_0^t\mathcal{R}(t-s)\mathcal{E}(s,h)\,ds,$$
where the resolvent $\mathcal{R}(t)$ associated with $|K_-(t)|+|K_+(t)|$ is defined as the unique solution to the linear equation
$$ \mathcal{R}(t)=|K_-(t)|+|K_+(t)|+\dint_0^t\big[|K_-(t-s)|+|K_+(t-s)|\big]\mathcal{R}(s)\,ds. $$

Since $|K_-|+|K_+|\in L^1(0,T)$, then by Lemma 1 in \cite{MF}, we get that $\mathcal{R}\in L^1(0,T)$ and $\mathcal{R}\geq 0$ a.e. Therefore, 
\begin{align*}
&|D_-(t,h)|+|D_+(t,h)|\\
\leq & |\mathcal{G}_-(t+h)-\mathcal{F}_-(t)|+|\mathcal{G}_+(t+h)-\mathcal{F}_+(t)|\\
&+\dint_0^t\mathcal{R}(t-s)[|\mathcal{G}_-(s+h)-\mathcal{F}_-(s)|+|\mathcal{G}_+(s+h)-\mathcal{F}_+(s)|]\,ds. 
\end{align*}

Since $ \dlim_{h\downarrow 0} \mathcal{G}_{\pm}(t,h)=\mathcal{F}_{\pm}(t) $ and $\mathcal{R}\in L^1(0,T)$, we obtain
$$ \dlim_{h\downarrow 0}|D_-(t,h)|+|D_+(t,h)|=0. $$

It implies that $Q_{\pm}(t,h)\to V_{\pm}(t)$ as $h\downarrow 0$ uniformly in $t\in[\delta,T-\delta]$.

Since the convergence is uniform in $t\in [\delta,T-\delta]$, it follows that the sequences $\{Q_{\pm}(\cdot,h)\}_{h\in(0,\delta)}$ in $C[\delta,T-\delta]$ are equicontinuous. Hence,
$$ \dlim_{h\uparrow 0} Q_{\pm}(t,h)=\dlim_{h\downarrow 0}Q_{\pm}(t-h,h)=\dlim_{h\downarrow 0}Q_{\pm}(t,h)=V_{\pm}(t). $$

Similarly, we can also show that $V_{\pm}(T)$ is the left derivative of $v_{\pm}(t)$ at $t=T$. Since $\delta>0$ is arbitrary, $v'_{\pm}(t)=V_{\pm}(t), \forall t\in(0,T]$. By Proposition \ref{Vol1}, it follows that $v_{\pm}\in C^1(0,T]$. Since $T>0$ is also arbitrary, $v_{\pm}\in C^1(0,\infty)$.
\end{proof}

Since $v_{\pm}\in C^1(0,\infty)$, we can check that $u\in C^{2,1}([0,1]\times (0,\infty))$.

Therefore, in view of Proposition \ref{C1} and the above regularity of $u$, the boundary conditions can be rewritten in a strong form as follows.
\begin{cor}
For any $t>0$, $v_{\pm}$ satisfy
\begin{equation}
\dfrac{d}{dt}v_-(t)=\dfrac{1}{2}u_r(0,t), \quad\quad \dfrac{d}{dt}v_+(t)=-\dfrac{1}{2}u_r(1,t), \quad\quad \dlim_{t\to 0}v_{\pm}(t) = v_{0,\pm}.
\end{equation}
\end{cor}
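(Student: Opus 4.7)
The plan is to upgrade the weak form already established in Proposition \ref{3.8} to the strong (pointwise) form, using the $C^{2,1}$-regularity of $u$ up to the boundary that was just announced right above the corollary and the $C^1$-regularity of $v_{\pm}$ from Proposition \ref{C1}.

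First, I would start from the two identities of Proposition \ref{3.8}:
\begin{equation*}
\int_0^l u(r,t)\,dr - \int_0^l u(r,t_0)\,dr = \int_{t_0}^t \tfrac{1}{2} u_r(l,s)\,ds + v_-(t_0) - v_-(t),
\end{equation*}
and the analogous relation on $[l,1]$ involving $v_+$. Because $u \in C^{2,1}([0,1]\times(0,\infty))$, the function $(l,s) \mapsto u_r(l,s)$ is continuous on $[0,1]\times[t_0,t]$ and in particular uniformly bounded there. Thus, as $l \downarrow 0$, both $\int_0^l u(r,t)\,dr$ and $\int_0^l u(r,t_0)\,dr$ tend to $0$, while the dominated convergence theorem lets me pass $l \downarrow 0$ under the $ds$-integral to obtain
\begin{equation*}
v_-(t) - v_-(t_0) = \int_{t_0}^t \tfrac{1}{2} u_r(0,s)\,ds, \qquad 0 < t_0 \le t.
\end{equation*}
Exactly the same argument with $l \uparrow 1$ in \eqref{right} yields $v_+(t) - v_+(t_0) = -\int_{t_0}^t \tfrac{1}{2} u_r(1,s)\,ds$.

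Next, since $s \mapsto u_r(0,s)$ and $s \mapsto u_r(1,s)$ are continuous on $(0,\infty)$ by the regularity of $u$, the fundamental theorem of calculus applied to these two integral identities gives directly
\begin{equation*}
\frac{d}{dt} v_-(t) = \frac{1}{2} u_r(0,t), \qquad \frac{d}{dt} v_+(t) = -\frac{1}{2} u_r(1,t),
\end{equation*}
for every $t>0$. This is consistent with $v_\pm \in C^1(0,\infty)$ already proved in Proposition \ref{C1}, so no extra regularity check is needed. The initial relations $\lim_{t\to 0} v_\pm(t) = v_{0,\pm}$ are immediate from Corollary \ref{subseq} (uniform convergence of $\tilde\rho^{\eps_k}_\pm$ to $v_\pm$ on compact time intervals, including $0$) together with the initial datum \eqref{e}.

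The only genuinely delicate step is the passage to the limit $l\to 0,1$ inside $\int_{t_0}^t \tfrac{1}{2} u_r(l,s)\,ds$. Since the interior regularity from the explicit representation \eqref{exu} only guarantees $C^{2,1}((0,1)\times(0,\infty))$, I rely on the strengthened statement (noted immediately before the corollary) that $u \in C^{2,1}([0,1]\times(0,\infty))$, which itself follows from $v_\pm \in C^1(0,\infty)$ via classical parabolic regularity for the heat equation with $C^1$ boundary data. With that in hand, all boundary limits are routine and no further approximation argument is required.
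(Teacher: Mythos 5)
Your proposal is correct and follows essentially the same route as the paper: the paper likewise upgrades the weak identities of Proposition \ref{3.8} (equivalently the boundary conditions \eqref{bdry}) to the strong form by invoking $v_{\pm}\in C^1(0,\infty)$ from Proposition \ref{C1} and the consequent regularity $u\in C^{2,1}([0,1]\times(0,\infty))$, with the initial condition coming from the uniform convergence of $\tilde{\rho}^{\eps_k}_{\pm}$. You merely spell out the routine details (dominated convergence for $l\to 0,1$ and the fundamental theorem of calculus) that the paper compresses into a single sentence.
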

\subsection{Uniqueness of the solutions to the free boundary problem}
Notice that we have just identified the limits of the sequences $\rho^{\eps}([\eps^{-1}r], \eps^{-2}t)$, $\rho^{\eps}_{\pm}(\eps^{-2}t)$ up to a subsequence so far. 

Let us now consider other subsequences $\tilde{\rho}^{\eps_m}_{\pm}$ such that they converge uniformly to $\hat{v}_{\pm}$, respectively. It gives us the corresponding limit $\hat{u}$ of the subsequence $\rho^{\eps_m}$ given by the same expression as in \eqref{u}, where $v_{\pm}$ are replaced by $\hat{v}_{\pm}$. Again from the analysis in Chapter 6 of \cite{C}, we know that $\hat{u}$ also solves the boundary value problem \eqref{2.9} with the boundary values $\hat{v}_{\pm}$ and the initial datum $u_0$. On the other hand, using the same argument as in section \ref{bdrylimit} for the subsequence $\rho^{\eps_m}$, it follows that the boundary conditions as in \eqref{bdry} hold true for $\hat{v}_{\pm}$. 

We claim that for all $t\geq 0$, $v_-(t)=\hat{v}_-(t), v_+(t)=\hat{v}_+(t)$, so that for all $ t\geq 0$ and $r\in[0,1]$, $u(r,t)=\hat{u}(r,t)$. Indeed, let us denote $\bar{v}_{\pm}=v_{\pm}-\hat{v}_{\pm}$. We observe that $v_{\pm}$ inherit the boundedness and continuity from the sequence $\rho_{\pm}^{\eps}$. By \eqref{first} and \eqref{second}, it follows that
$$\bar{v}_-(t)=-\dint_{[0,t)}\Theta(0,t-s)\bar{v}_-(s)\,ds+\dint_{[0,t)}\Theta(1,t-s)\bar{v}_+(s)\,ds,$$
and similarly,
$$ \bar{v}_+(t)= \dint_{[0,t)}\Theta(1,t-s)\bar{v}_-(s)\,ds-\dint_{[0,t)}\Theta(0,t-s)\bar{v}_+(s)\,ds. $$


We denote $\mathbf{V}(t)=\bar{v}_-(t)+\bar{v}_+(t)$. The two above expressions give us
$$ \mathbf{V}(t)=\dint_{[0,t)} \big(\Theta(1,t-s)-\Theta(0,t-s)\big)\mathbf{V}(s)\,ds. $$

Applying Gronwall's inequality, we obtain that $\mathbf{V}(t)=0, \forall t\geq 0$. Since for any $t\geq 0$, $\bar{v}_{\pm}(t)\in[0,1]$, we arrive at our above claim.

Therefore, we have shown that the sequences $\rho^{\eps}([\eps^{-1}r], \eps^{-2}t), \rho^{\eps}_{\pm}(\eps^{-2}t)$ converge to $u(r,t)$ and $v_{\pm}(t)$, respectively, as $\eps$ goes to $0$. This completes the proof of Theorem \ref{PDE}.
\section{Proof of Theorem \ref{POC}}\label{PrPOC}
We denote $\Sigma_N:=\mathcal{S}_- \cup\Lambda_N\cup \mathcal{S}_+$. Let us consider the continuous time random walk $Y$ on $\Sigma_N$ with the generator given by
\begin{align}\label{RWAB}
&L^Yf(z)=\dfrac{1}{2N}[f(1)-f(z)], \text{ for } z\in\mathcal{S}_-\notag\\
&L^Yf(1)=\dfrac{1}{2}[f(2)-f(1)]+\dfrac{1}{2N}\dsum_{z\in\mathcal{S}_-}[f(z)-f(1)]\notag\\
&L^Yf(x)=\dfrac{1}{2}[f(x+1)+f(x-1)-2f(x)], \text{ for } 2\leq x\leq N-1\notag\\
&L^Yf(N)=\dfrac{1}{2}[f(N-1)-f(N)]+\dfrac{1}{2N}\dsum_{z\in\mathcal{S}_+}[f(z)-f(N)]\notag\\
&L^Yf(z)=\dfrac{1}{2N}[f(N)-f(z)], \text{ for } z\in\mathcal{S}_+.
\end{align}

Then we can obtain the so-called duality between our current exclusion process with density reservoirs at the boundaries and the continuous time random walk $Y$, namely
\begin{equation}\label{dualitynew}
\mathbf{E}^{*,\eps}\big[\eta(x,t)\big]=E_{x}\mathbf{E}^{*,\eps}\big[\eta(Y(t),0)\big], \forall x\in\Sigma_N.
\end{equation}

We call $\Gamma_N$ the set of non ordered couples of adjacent sites in $\Lambda_N$. Let us consider the process $(X_1, X_2)$ of two stirring walks with the generator $\mathbb{L}$ that acts on functions $f$ on $\{(x_1,x_2)\in \Sigma_N^2, x_1\neq x_2\}$ as follows
$$ \mathbb{L}f(x_1, x_2)=\dsum_{\{z,z'\}} c_{\{z,z'\}}\big[f\big((x_1,x_2)^{\{z,z'\}}\big)-f(x_1,x_2)\big],$$
where
\begin{itemize}[label={$\bullet$}]
\item The bond ${\{z,z'\}}\in \{\{z,1\}, z\in\mathcal{S}_-\} \cup\Gamma_N\cup \{\{N,z\}, z\in\mathcal{S}_+\}$;
\item $c_{\{z,1\}}=\dfrac{1}{2N}$, for $z\in\mathcal{S}_-$;$\quad c_{\{N, z\}}=\dfrac{1}{2N}$, for $z\in\mathcal{S}_+$;
\item $c_{\{z,z'\}}=\dfrac{1}{2}$, for $\{z,z'\}\in\Gamma_N$;
\item $(x_1,x_2)^{\{z,z'\}}=\big(x_1^{\{z,z'\}},x_2^{\{z,z'\}}\big)$ and for $i=\overline{1,2}$,
$$x_i^{\{z,z'\}}=
\begin{cases}
x_i, &\text{ if } z,z'\neq x_i,\\
z, &\text{ if } z'=x_i,\\
z', &\text{ if } z=x_i.
\end{cases}
$$
\end{itemize} 

Denote by $\mathbb{P}_{(x_1,x_2)}, \mathbb{E}_{(x_1,x_2)}$ the law and expectation of the process $(X_1, X_2)$ starting from $(x_1,x_2)$. Then for all $0\leq s<t$,
$$ \dfrac{d}{ds}\mathbb{E}_{(x_1,x_2)}\mathbf{E}^{*,\eps}\Big[\eta(X_1(t-s),s)\,\eta(X_2(t-s),s)\Big]=0. $$

It implies that
\begin{equation}\label{duality2}
\mathbf{E}^{*,\eps}\big[\eta(x_1,t)\,\eta(x_2,t)\big]=\mathbb{E}_{(x_1,x_2)}\mathbf{E}^{*,\eps}\Big[\eta(X_1(t),0)\,\eta(X_2(t),0)\Big].
\end{equation}

Let $Y_1,Y_2,Y$ be independent random walks having the generators given by \eqref{RWAB}. By using the integration by parts formula in Proposition 1.7 in Chapter VIII, \cite{L}, we can obtain that for any $t>0$,
\begin{align}\label{ibp}
&\mathbb{E}_{(x_1,x_2)}\big[f(Y_1(t), Y_2(t))\big]-\mathbb{E}_{(x_1,x_2)}\big[f(X_1(t), X_2(t))\big]\notag\\
&\quad=\dint_0^t\mathbb{E}_{(x_1,x_2)}\big[g(X_1(t-s), X_2(t-s))\big]\,ds, 
\end{align}
where
\begin{align*}
 &g(x_1,x_2)\\
=&\mathbf{1}_{\substack{|x_1-x_2|=1\\1\leq x_1,x_2\leq N}}\,\dfrac{1}{2}\big\{\mathbb{E}_{(x_1,x_1)}\big[f(Y_1(s), Y_2(s))\big]+\mathbb{E}_{(x_2,x_2)}\big[f(Y_1(s), Y_2(s))\big]\\
&\hskip7cm-2\mathbb{E}_{(x_1,x_2)}\big[f(Y_1(s), Y_2(s))\big]\big\}\\
+&\dsum_{z\in\mathcal{S}_-}\mathbf{1}_{x_1,x_2\in\{z,1\}}\,\dfrac{1}{2N}\big\{\mathbb{E}_{(1,1)}\big[f(Y_{1,s}, Y_2(s))\big]+\mathbb{E}_{(z,z)}\big[f(Y_1(s), Y_2(s))\big]\\
&\hskip7cm-2\mathbb{E}_{(1,z)}\big[f(Y_1(s), Y_2(s))\big]\big\}\\
+&\dsum_{z\in\mathcal{S}_+}\mathbf{1}_{x_1,x_2\in\{N,z\}}\dfrac{1}{2N}\big\{\mathbb{E}_{(N,N)}\big[f(Y_1(s), Y_2(s))\big]+\mathbb{E}_{(z,z)}\big[f(Y_1(s), Y_2(s))\big]\\
&\hskip7cm-2\mathbb{E}_{(N,z)}\big[f(Y_1(s), Y_2(s))\big]\big\}.
\end{align*}
For any subset $A$ of $\Sigma_N$, choosing $f(z_1,z_2)=\mathbf{1}_{(z_1,z_2)\in A\times A}$ in \eqref{ibp} gives us the Liggett's inequality
\begin{equation}\label{Lig}
\mathbb{P}_{(x_1,x_2)}\big((X_1(t), X_2(t))\in A\times A\big)\leq \mathbb{P}_{(x_1,x_2)}\big((Y_1(t), Y_2(t))\in A\times A\big).
\end{equation}
Moreover, by choosing $f(z_1,z_2)=\mathbf{E}^{*,\eps}\big[\eta(z_1,0)\eta(z_2,0)\big]$ in \eqref{ibp}, we obtain that

$$\mathbb{E}_{(x_1,x_2)}\mathbf{E}^{*,\eps}\big[\eta(Y_1(t),0)\eta(Y_2(t),0)\big]-\mathbb{E}_{(x_1,x_2)}\mathbf{E}^{*,\eps}\big[\eta(X_1(t),0)\eta(X_2(t),0)\big]$$
$$=\dint_0^t\mathbb{E}_{(x_1,x_2)}\big[\bar{g}(X_1(t-s),X_2(t-s))\big]\,ds,$$
where
\begin{align*}
&\bar{g}(x_1,x_2)\\
=&\mathbf{1}_{\substack{|x_1-x_2|=1\\1\leq x_1,x_2\leq N}}\dfrac{1}{2}\big\{E_{x_1}\mathbf{E}^{*,\eps}\big[\eta(Y(s),0)\big]-E_{x_2}\mathbf{E}^{*,\eps}\big[\eta(Y(s),0)\big]\big\}^2\\
&+\dsum_{z\in\mathcal{S}_-}\mathbf{1}_{x_1, x_2\in\{z,1\}}\dfrac{1}{2N}\big\{E_z\mathbf{E}^{*,\eps}\big[\eta(Y(s),0)\big]-E_1\mathbf{E}^{*,\eps}\big[\eta(Y(s),0)\big]\big\}^2\\
&+\dsum_{z\in\mathcal{S}_+}\mathbf{1}_{x_1, x_2\in\{N,z\}}\dfrac{1}{2N}\big\{E_N\mathbf{E}^{*,\eps}\big[\eta(Y(s),0)\big]-E_z\mathbf{E}^{*,\eps}\big[\eta(Y(s),0)\big]\big\}^2.
\end{align*}
Then the conclusion of Theorem \ref{POC} will follow from the following two steps.

In the first step, we will look for the upper bounds of the terms of the form $\big|E_{y_1}\mathbf{E}^{*,\eps}\big[\eta(Y(s),0)\big]-E_{y_2}\mathbf{E}^{*,\eps}\big[\eta(Y(s),0)\big]\big|$, for any bond $\{y_1, y_2\}\in \{\{z,1\}, z\in\mathcal{S}_-\} \cup\Gamma_N\cup \{\{N,z\}, z\in\mathcal{S}_+\}$. 
\begin{lem}\label{boundsquare}
For any $s>0$,
\begin{equation}\label{Sleft}
\big|E_z\mathbf{E}^{*,\eps}\big[\eta(Y(s),0)\big]-E_1\mathbf{E}^{*,\eps}\big[\eta(Y(s),0)\big]\big|\leq \dfrac{1}{N}+ \dfrac{C}{\sqrt{s}},\quad \forall z\in\mathcal{S}_- 
\end{equation}
\begin{equation}\label{Sright}
\big|E_N\mathbf{E}^{*,\eps}\big[\eta(Y(s),0)\big]-E_z\mathbf{E}^{*,\eps}\big[\eta(Y(s),0)\big]\big|\leq \dfrac{1}{N}+\dfrac{C}{\sqrt{s}},\quad \forall z\in\mathcal{S}_+.
\end{equation}
For any $\alpha\in (0,\dfrac{1}{6}), s>0$,
\begin{equation}\label{channel}
\big|E_y\mathbf{E}^{*,\eps}\big[\eta(Y(s),0)\big]-E_{y+1}\mathbf{E}^{*,\eps}\big[\eta(Y(s),0)\big]\big|\leq \dfrac{C}{N^{1-\alpha}}, \quad \forall y,y+1\in\Lambda_N.
\end{equation}
\end{lem}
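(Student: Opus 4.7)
The first move is to reduce the three differences in the lemma to a single sticky-random-walk setting. Because the dynamics on each reservoir $\mathcal{S}_\pm$ is symmetric and the particles there are uniformly distributed at time $0$, the function $f(y) := \mathbf{E}^{*,\eps}[\eta(y,0)]$ is constant on $\mathcal{S}_-$ and on $\mathcal{S}_+$ (equal to $n_{0,\pm}/N$), so $f = \bar f \circ \pi$ for the projection $\pi: \Sigma_N \to \bar\Lambda_N$ which collapses each reservoir to a single point. One verifies directly from \eqref{RWAB} that $\pi$ pushes $L^Y$ forward to the generator $L$ of Definition \ref{SRW}; consequently $\pi(Y)$ is the sticky random walk $X$, and $E_z \mathbf{E}^{*,\eps}[\eta(Y(s),0)] = E_{\pi(z)}[\bar f(X(s))] =: H(\pi(z),s)$. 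Thus \eqref{Sleft}--\eqref{Sright} become $|H(0,s) - H(1,s)| \le 1/N + C/\sqrt{s}$ and the symmetric statement at $N+1$, while \eqref{channel} becomes $|H(y,s) - H(y+1,s)| \le C/N^{1-\alpha}$ for $y,y+1 \in \Lambda_N$.

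For \eqref{Sleft} I would use a first-jump decomposition at $0$: since $X$ stays at $0$ for an exponential time of mean $2N$ before jumping to $1$, the strong Markov property yields
\begin{equation*}
H(0,s) - H(1,s) \;=\; e^{-s/(2N)}\bigl[\bar f(0) - H(1,s)\bigr] + \int_0^s \frac{e^{-u/(2N)}}{2N}\bigl[H(1,s-u) - H(1,s)\bigr]\,du.
\end{equation*}
The boundary term is bounded by $e^{-s/(2N)}$, which is $\le 1$ trivially and $\le C/\sqrt s$ for $s\ge 1$. For the integrand, the semigroup identity $H(1,s) = E_1[(e^{uL}\bar f)(X(s-u))]$ gives $|H(1,s-u) - H(1,s)| \le \|e^{uL}\bar f - \bar f\|_\infty$, and I would estimate this norm by splitting on the starting point: from $\{0,N+1\}$ the walk moves at rate $1/(2N)$, contributing $\le u/(2N)$, while from a site in $\Lambda_N$ the walk behaves as simple symmetric random walk until its first boundary hit, so the $C^1$-smoothness $|\bar f(x)-\bar f(x+1)| \le C/N$ on $\Lambda_N$ combined with the local CLT estimate \eqref{LL} yields a typical-displacement contribution of order $\sqrt u/N$ plus a boundary-hitting piece handled exactly as in the proof of Proposition \ref{uniconv}. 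Integrating the resulting modulus of continuity against $(2N)^{-1} e^{-u/(2N)}$ produces the claimed bound $1/N + C/\sqrt s$; \eqref{Sright} follows by the symmetric argument at $N+1$.

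For \eqref{channel} I would reduce to a discrete parabolic problem driven by the boundary data just bounded. Setting $D(y,s) := H(y,s) - H(y+1,s)$, a direct calculation from the generator shows that $\partial_s D(y,s) = \tfrac12[D(y-1,s) - 2D(y,s) + D(y+1,s)]$ for $y \in \{1,\ldots,N-1\}$, with time-dependent Dirichlet data $D(0,s),D(N,s)$ controlled by \eqref{Sleft}--\eqref{Sright} and initial data $D(y,0) = u_0(\eps y) - u_0(\eps(y+1)) = O(1/N)$ since $u_0 \in C^1$. I would split $D = D^{\mathrm{init}} + D^{\mathrm{BC}}$ by Duhamel: the maximum principle immediately gives $\|D^{\mathrm{init}}\|_\infty \le C/N$, while $D^{\mathrm{BC}}(y,s)$ is the time-convolution of the boundary data against the normal derivative of the Dirichlet heat kernel on $\{1,\ldots,N-1\}$. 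Controlling this kernel by the free heat kernel on short time scales $s-u \le N^{2-2\alpha}$ and by the spectral-gap decay of order $e^{-(s-u)/N^2}$ on longer scales, and substituting the $1/N + C/\sqrt u$ bound on the boundary data, a careful splitting of the time integral yields $\|D^{\mathrm{BC}}\|_\infty \le C/N^{1-\alpha}$. The main obstacle will be precisely this time convolution: the $C/\sqrt u$ boundary singularity is barely integrable and combines with the $1/\sqrt{s-u}$ singularity of the kernel's normal derivative, forcing a careful split that loses a small power $N^\alpha$; the restriction $\alpha < 1/6$ should emerge from optimizing the threshold between the two regimes.
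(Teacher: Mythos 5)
Your reduction to the sticky walk $X$ via the projection $\pi$ is correct (and consistent with the paper's duality), but your argument for \eqref{Sleft}--\eqref{Sright} has two concrete errors. First, the boundary term: the claim $e^{-s/(2N)}\le C/\sqrt{s}$ cannot hold with $C$ uniform in $N$ (at $s=N$ the left side is $e^{-1/2}$ while the right side is $CN^{-1/2}$), and since you bound the bracket $|\bar f(0)-H(1,s)|$ by $1$, your boundary term is of order one throughout the window $1\ll s\ll N$, exactly where the lemma asserts smallness; to repair it you would need $|\bar f(0)-H(1,s)|\lesssim 1/\sqrt{s}+\sqrt{s}/N+1/N$, i.e.\ that the walk started at $1$ is stuck at $0$ with high probability at time $s$ --- essentially the statement being proved, requiring an excursion or coupling argument you do not supply. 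Second, the estimate of $\|e^{uL}\bar f-\bar f\|_\infty$ fails: under \eqref{2.8} there is no compatibility between $n_{0,\pm}/N$ and $u_0$ at the endpoints, so $\bar f$ has an $O(1)$ jump at the sticky sites; a walk started at site $1$ hits $0$ within time $u$ with probability of order one (for $u\gtrsim 1$) and sticks there, whence $\|e^{uL}\bar f-\bar f\|_\infty=\Theta(1)$, not $O(\sqrt{u}/N+u/N)$. Your appeal to Proposition \ref{uniconv} is inapt: the factor $\eps$ there comes from summing the kernel $p_t(x,0)$ over all $x\in\Lambda_N$ with weight $\eps$ (the reversibility structure), not from any sup-norm modulus of continuity. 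The paper avoids all of this with a one-line coupling: two walks started from $N$ and $z\in\mathcal{S}_+$ evolve independently until they meet or lie in the same reservoir, and the difference of expectations is at most the non-coupling probability, bounded by $P_N(\tau^{Y^{\rm sp}}_{N+1}>s)+P_N(\tau^{Y^{\rm sp}}_0<\tau^{Y^{\rm sp}}_{N+1})\le C/\sqrt{s}+1/N$.

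Your Duhamel scheme for \eqref{channel} is a legitimate skeleton and in fact mirrors the paper's coupling proof (your exit-flux kernel plays the role of the first-touching density $\mathds{A}(\sigma)$, your boundary data that of the re-coupling failure probability $\mathds{B}(\sigma)$, and your maximum-principle step on the initial data is the paper's $C^1$ bound on the event that neither walk touches a reservoir). But the decisive convolution estimate is only asserted, and as stated the conclusion $\|D^{\mathrm{BC}}\|_\infty\le C/N^{1-\alpha}$ uniformly in $(y,s)$ is not attainable: for $y$ adjacent to the boundary and $s=O(1)$ the exit flux into $0$ carries mass $\Theta(1)$ against boundary data of size $\Theta(1)$, so the convolution is $\Theta(1)$ there; the paper's own argument implicitly exploits a re-coupling window of length $N^{2-\alpha}$ (i.e.\ effectively $s\ge N^{2-\alpha}$) precisely to control this region. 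Also, your guess about the origin of $\alpha<1/6$ is wrong: the proof of \eqref{channel} imposes no such threshold, and $1/6$ enters only later, when \eqref{channel} is squared and integrated against the adjacency bound of Lemma \ref{boundprob}, where one needs $N^{2\alpha-1/3}\to 0$. Finally, since your \eqref{channel} argument takes \eqref{Sleft}--\eqref{Sright} as input boundary data, the gap in the first part propagates to the whole lemma.
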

\begin{proof} 
To verify \eqref{Sleft} and \eqref{Sright}, we will use the same argument. So only the estimate \eqref{Sright} is proved here. Let $Z'_1, Z'_2$ be two random walks whose generators are given by \eqref{RWAB}. We construct the coupling $\underline{Z'}=(Z'_1, Z'_2)$ as follows: $Z'_1$ and $Z'_2$ start from $N$ and $z\in\mathcal{S}_+$, move independently up to the first time when they meet each other or both of them are in the same reservoir, and from that time they move in the same way. Let us denote $\mathcal{C'}(s)$ the event that $Z'_1$ and $Z'_2$ do not meet each other and both of them are not in the same reservoir up to time $s$. Therefore, for any $s>0$ and $z\in\mathcal{S}_+$,
\begin{align*}
&\big|E_N\mathbf{E}^{*,\eps}\big[\eta(Y(s),0)\big]-E_z\mathbf{E}^{*,\eps}\big[\eta(Y(s),0)\big]\big|\\
=&\Big|\mathbb{E}_{(N,z)}\Big[\big[\mathbf{E}^{*,\eps}\big[\eta(Z'_1(s),0)\big]-\mathbf{E}^{*,\eps}\big[\eta(Z'_2(s),0)\big]\big]\mathbf{1}_{\mathcal{C'}(s)}\Big]\Big|\\
\leq &\mathbb{P}_{(N,z)}(\mathcal{C'}(s))\\
\leq &P_N(\tau^{Y^{\rm sp}}_{N+1}>s)+P_N(\tau_0^{Y^{\rm sp}}<\tau_{N+1}^{Y^{\rm sp}})\\
\leq &\dfrac{C}{\sqrt{s}}+\dfrac{1}{N},
\end{align*}
where $Y^{\rm sp}$ is a simple symmetric random walk on $\mathbb{Z}$.

Next we will show the estimate \eqref{channel}. For two random walks $Z_1,Z_2$ having generators given by \eqref{RWAB}, we construct the coupling $\underline{Z}=(Z_1, Z_2)$ as follows: $Z_1$ and $Z_2$ start from $y$ and $y+1$, move such that they are closed of distance $1$ up to the first time when one of them reach the reservoir, from that time they move independently until when they first meet each other or both of them are in the same reservoir, and then they move in the same way. Call $\mathcal{C}(s)$ the event that $Z_1$ and $Z_2$ do not meet each other and both of them are not in the same reservoir up to time $s$. For $y, y+1\in\Lambda_N$ and $s>0$,
\begin{align*}
&\big|E_y\mathbf{E}^{*,\eps}\big[\eta(Y(s),0)\big]-E_{y+1}\mathbf{E}^{*,\eps}\big[\eta(Y(s),0)\big]\big|\\
=&\Big|\mathbb{E}_{(y,y+1)}\Big[\big[\mathbf{E}^{*,\eps}\big[\eta(Z_1(s),0)\big]-\mathbf{E}^{*,\eps}\big[\eta(Z_2(s),0)\big]\big]\mathbf{1}_{\mathcal{C}(s)}\Big]\Big|.
\end{align*}

For $\alpha \in (0,\dfrac{1}{6})$, let us consider the following cases.

Case 1: $\mathcal{A}=\{\underline{Z}$ touches $\mathcal{S}_-$ or $\mathcal{S}_+$ for the first time prior to $s-N^{2-\alpha}\}$.

Set $\tau_{\mathcal{S}_-}^{Z_1}=\inf\{\sigma>0, Z_1(\sigma)\in\mathcal{S}_-\}$ and $\tau_{\mathcal{S}_+}^{Z_2}=\inf\{\sigma>0, Z_2(\sigma)\in\mathcal{S}_+\}$. Then
\begin{align*}
&\Big|\mathbb{E}_{(y,y+1)}\Big[\big[\mathbf{E}^{*,\eps}\big[\eta(Z_1(s),0)\big]-\mathbf{E}^{*,\eps}\big[\eta(Z_2(s),0)\big]\big]\mathbf{1}_{\mathcal{C}(s)}\mathbf{1}_{\mathcal{A}}\Big]\Big|\\
\leq \,\,\,&\mathbb{P}_{(y,y+1)}(\mathcal{C}(s)\cap\mathcal{A})\\
=\,\,\,&\dint_0^{s-N^{2-\alpha}}\mathbb{P}_{(y,y+1)}(\tau_{\mathcal{S}_-}^{Z_1}\wedge\tau_{\mathcal{S}_+}^{Z_2}=\sigma)\,\mathbb{P}_{(y,y+1)}(\mathcal{C}(s)|\tau_{\mathcal{S}_-}^{Z_1}\wedge\tau_{\mathcal{S}_+}^{Z_2}=\sigma)\,d\sigma\\
=\,\,\,&\dint_0^{s-N^{2-\alpha}}\mathbb{P}_{(y,y+1)}(\tau_{\mathcal{S}_-}^{Z_1}\wedge\tau_{\mathcal{S}_+}^{Z_2}=\sigma)\Big(\dfrac{1}{N}+\dfrac{C}{\sqrt{s-\sigma}}\Big)\,d\sigma\leq \dfrac{C}{N^{1-\frac{\alpha}{2}}}.
\end{align*}

Case 2: $\mathcal{B}=\{\underline{Z}$ does not touch $\mathcal{S}_{\pm}$ for the first time prior to $s-N^{2-\alpha}\}$.

Set
$$\mathcal{D}=\{\underline{Z} \text{ does not touch $\mathcal{S}_{\pm}$ after } s-N^{2-\alpha}\}.$$

By the assumption that $u_0\in C^1(0,1)$, we get
\begin{align*}
&\Big|\mathbb{E}_{(y,y+1)}\Big[\big[\mathbf{E}^{*,\eps}\big[\eta(Z_1(s),0)\big]-\mathbf{E}^{*,\eps}\big[\eta(Z_2(s),0)\big]\big]\mathbf{1}_{\mathcal{C}(s)}\mathbf{1}_{\mathcal{B}\cap \mathcal{D}}\Big]\Big|\\
=&\Big|\mathbb{E}_{(y,y+1)}\big[\big[u_0(\eps Z_1(s))-u_0(\eps Z_2(s))\big]\mathbf{1}_{\mathcal{C}(s)}\mathbf{1}_{\mathcal{B}\cap \mathcal{D}}\big]\Big|\\
\leq &\mathbb{E}_{(y,y+1)}\big[C\big|\eps Z_1(s)-\eps Z_2(s)\big|\big]\leq \dfrac{C}{N}.
\end{align*}

On the other hand, we have
\begin{align*}
&\Big|\mathbb{E}_{(y,y+1)}\Big[\big[\mathbf{E}^{*,\eps}\big[\eta(Z_1(s),0)\big]-\mathbf{E}^{*,\eps}\big[\eta(Z_2(s),0)\big]\big]\mathbf{1}_{\mathcal{C}(s)}\mathbf{1}_{\mathcal{B}\cap \mathcal{D}^c}\Big]\Big|\\
\leq &\mathbb{P}_{(y,y+1)}(\mathcal{C}(s)\cap\mathcal{B}\cap \mathcal{D}^c)\\
\leq & \dsum_{z,z+1\in\Lambda_N}\mathbb{P}_{(y,y+1)}(\underline{Z}(s-N^{2-\alpha})=(z,z+1))\,\mathbb{P}_{(z,z+1)}(\mathcal{C}(N^{2-\alpha})\cap \mathcal{E}),
\end{align*}
where $\mathcal{E}=\{\underline{Z} \text{ touches $\mathcal{S}_-$ or $\mathcal{S}_+$ in $[0,N^{2-\alpha}]$}\}$. For all $z,z+1\in \Lambda_N$, since
\begin{align*}
\mathds{A}(\sigma):=&\mathbb{P}_{(z,z+1)}(\tau_{\mathcal{S}_-}^{Z_1}\wedge\tau_{\mathcal{S}_+}^{Z_2}=\sigma)\\
\leq &\mathbb{P}_{(z,z+1)}(\tau_{\mathcal{S}_-}^{Z_1}=\sigma,\tau_{\mathcal{S}_-}^{Z_1}<\tau_{\mathcal{S}_+}^{Z_1})+\mathbb{P}_{(z,z+1)}(\tau_{\mathcal{S}_+}^{Z_2}=\sigma,\tau_{\mathcal{S}_+}^{Z_2}<\tau_{\mathcal{S}_-}^{Z_2})\\
= &P_z(\tau_{\mathcal{S}_-}^{Z_1}=\sigma,\tau_{\mathcal{S}_-}^{Z_1}<\tau_{\mathcal{S}_+}^{Z_1})+P_{z+1}(\tau_{\mathcal{S}_+}^{Z_2}=\sigma,\tau_{\mathcal{S}_+}^{Z_2}<\tau_{\mathcal{S}_-}^{Z_2})\\
\leq &P_z(\tau_0^{Y^{\rm sp}}=\sigma)+P_{z+1}(\tau_{N+1}^{Y^{\rm sp}}=\sigma)\\
\leq &\dfrac{Cz}{\sigma^{3/2}}+\dfrac{C(N+1-(z+1))}{\sigma^{3/2}}\\
\leq &\dfrac{CN}{\sigma^{3/2}},
\end{align*}
and
\begin{align*}
&\mathds{B}(\sigma):=\mathbb{P}_{(z,z+1)}(\mathcal{C}(N^{2-\alpha})\cap \mathcal{E}|\tau_{\mathcal{S}_-}^{Z_1}\wedge\tau_{\mathcal{S}_+}^{Z_2}=\sigma)\\
=&\mathbf{1}_{\tau_{\mathcal{S}_-}^{Z_1}<\tau_{\mathcal{S}_+}^{Z_2}}\dsum_{u\in\mathcal{S}_-}\mathbf{1}_{\tau_{\mathcal{S}_-}^{Z_1}=\tau_u^{Z_1}}\,\mathbb{P}_{(u,1)}(\mathcal{C}(N^{2-\alpha}-\sigma))\\
&\quad+\mathbf{1}_{\tau_{\mathcal{S}_+}^{Z_2}<\tau_{\mathcal{S}_-}^{Z_1}}\dsum_{u\in\mathcal{S}_+}\mathbf{1}_{\tau_{\mathcal{S}_+}^{Z_2}=\tau_u^{Z_2}}\,\mathbb{P}_{(N,u)}(\mathcal{C}(N^{2-\alpha}-\sigma))\\
\leq &\dfrac{1}{N}+P_1(\tau_0^{Y^{\rm sp}}>N^{2-\alpha}-\sigma)+P_N(\tau_{N+1}^{Y^{\rm sp}}>N^{2-\alpha}-\sigma)\\
\leq &\dfrac{1}{N}+\dfrac{C}{\sqrt{N^{2-\alpha}-\sigma}},
\end{align*}
then
\begin{align*}
&\mathbb{P}_{(z,z+1)}(\mathcal{C}(N^{2-\alpha})\cap \mathcal{E})=\dint_0^{N^{2-\alpha}}\mathds{A}(\sigma)\,\mathds{B}(\sigma)\,d\sigma\\
\leq &\Big(\dfrac{1}{N}+\dfrac{C}{\sqrt{N^{2-\alpha}}}\Big)\dint_0^{\frac{N^{2-\alpha}}{2}}\mathds{A}(\sigma)\,d\sigma+\dint_{\frac{N^{2-\alpha}}{2}}^{N^{2-\alpha}}\dfrac{CN}{\sigma^{3/2}}\Big(\dfrac{1}{N}+\dfrac{C}{\sqrt{N^{2-\alpha}-\sigma}}\Big)\,d\sigma\\
\leq &\dfrac{C}{N^{1-\alpha}}.
\end{align*}

It follows that
$$ \Big|\mathbb{E}_{(y,y+1)}\Big[\big[\mathbf{E}^{*,\eps}\big[\eta(Z_1(s),0)\big]-\mathbf{E}^{*,\eps}\big[\eta(Z_2(s),0)\big]\big]\mathbf{1}_{\mathcal{C}(s)}\mathbf{1}_{\mathcal{B}\cap \mathcal{D}^c}\Big]\Big|\leq \dfrac{C}{N^{1-\alpha}}. $$

Hence, the above cases imply the estimate \eqref{channel}.
\end{proof}
In the second step, we will verify the following bound.
\begin{lem}\label{boundprob}
For $\theta:=N^2(t-s)$,
$$\mathbb{P}_{(x_1,x_2)}(|X_1(\theta)-X_2(\theta)|=1, 1\leq X_1(\theta), X_2(\theta)\leq N)\leq\dfrac{C}{\sqrt{\theta}}+\dfrac{C}{N^{1/3}}.$$
\end{lem}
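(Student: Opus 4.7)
The plan is to combine the Liggett inequality \eqref{Lig} with a pointwise heat-kernel estimate for the auxiliary walk $Y$. First I decompose
\[
\bigl\{|X_1(\theta)-X_2(\theta)|=1,\ X_1(\theta),X_2(\theta)\in\Lambda_N\bigr\}=\bigsqcup_{y=1}^{N-1}\bigl\{(X_1(\theta),X_2(\theta))\in\{(y,y+1),(y+1,y)\}\bigr\},
\]
and since the stirring dynamics preserves $X_1(t)\neq X_2(t)$, each event in the union coincides with $\{(X_1(\theta),X_2(\theta))\in\{y,y+1\}^2\}$. Applying \eqref{Lig} and the independence of $Y_1,Y_2$ then gives
\[
\mathbb{P}_{(x_1,x_2)}\bigl(|X_1(\theta)-X_2(\theta)|=1,\ X_i(\theta)\in\Lambda_N\bigr)\leq\sum_{y=1}^{N-1}P_{x_1}(Y(\theta)\in\{y,y+1\})\,P_{x_2}(Y(\theta)\in\{y,y+1\}).
\]
Bounding one factor pointwise by $2\sup_{y\in\Lambda_N}P_{x_1}(Y(\theta)=y)$ and using $\sum_{y=1}^{N-1}P_{x_2}(Y(\theta)\in\{y,y+1\})\leq 2$, everything reduces to proving the uniform density bound
\[
(\star)\qquad\sup_{y\in\Lambda_N}P_x(Y(\theta)=y)\leq\frac{C}{\sqrt{\theta}}+\frac{C}{N},\qquad x\in\Sigma_N.
\]

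To establish $(\star)$ for $x\in\Lambda_N$, I project $Y$ onto the sticky random walk $X$ of Definition \ref{SRW} by identifying each reservoir $\mathcal{S}_\pm$ with the aggregated endpoint $0$ or $N+1$. The mean-field symmetry of the rates \eqref{RWAB} makes this projection Markovian with the correct generator (standard lumping criterion), so $P_x(Y(\theta)=y)=P_x(X(\theta)=y)$ for $x,y\in\Lambda_N$. Splitting on the first hitting time $\tau$ of $\{0,N+1\}$: on $\{\tau>\theta\}$ the walk $X$ coincides with the simple random walk $Y^{\rm sp}$, so the local CLT \eqref{LL} yields $P_x(X(\theta)=y;\tau>\theta)\leq C/\sqrt{\theta}$; on $\{\tau\leq\theta\}$, the strong Markov property together with the self-adjointness relation \eqref{selfadj}, which gives $P_0(X(u)=y)=\eps P_y(X(u)=0)\leq 1/N$ for $y\in\Lambda_N$ (and analogously at $N+1$), produces the $C/N$ contribution.

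For $x\in\mathcal{S}_-$ (and symmetrically for $\mathcal{S}_+$), the rates \eqref{RWAB} are invariant under permutations of $\mathcal{S}_-$, so $P_{x'}(Y(\theta)=y)$ is independent of $x'\in\mathcal{S}_-$. Combined with the reversibility of $Y$ with respect to the uniform measure $1/(3N)$ on $\Sigma_N$ (immediate from detailed balance applied to \eqref{RWAB}), this yields
\[
P_x(Y(\theta)=y)=\frac{1}{N}\sum_{x'\in\mathcal{S}_-}P_{x'}(Y(\theta)=y)=\frac{1}{N}\sum_{x'\in\mathcal{S}_-}P_y(Y(\theta)=x')\leq\frac{1}{N}.
\]
Assembling the three pieces proves $(\star)$ and hence the lemma, in fact with the stronger rate $C/N$ in place of $C/N^{1/3}$. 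The main technical content is the bound $(\star)$; the only subtlety is handling the case where the walk starts in the reservoir, where one cannot directly couple with the simple random walk and must instead exploit reversibility together with the permutation symmetry among reservoir sites.
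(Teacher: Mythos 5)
Your proof is correct, but after the shared first step it takes a genuinely different route from the paper. Both arguments begin identically: decompose over the unordered pair $\{y,y+1\}$ (disjoint since stirring preserves $X_1\neq X_2$) and apply Liggett's inequality \eqref{Lig} to reduce everything to a uniform bound on $P_x(Y(\theta)=y)$, $y\in\Lambda_N$. The paper then proceeds by a first/last-visit decomposition at the reservoirs with a time window $N^{1-\delta}$: the term $\mathbf{B}$ requires bounding a ratio of simple-random-walk probabilities, with a numerator bound $C/\sigma$ and a survival-probability lower bound $\geq C/N$ obtained via the technique of \cite{DPTV3}, and the trade-off at $\delta=1/3$ is what produces the $N^{-1/3}$. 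You instead lump $Y$ onto the sticky walk $X$ of Definition \ref{SRW} --- the lumping criterion does hold here, since every $z\in\mathcal{S}_-$ jumps to $1$ at the constant rate $1/(2N)$ and the total rate from $1$ into $\mathcal{S}_-$ is $M/(2N)=1/2$, matching $c(1,0)=1/2$ precisely because $M=N$ (the identification would fail for $M\neq N$) --- and then split at the hitting time of $\{0,N+1\}$: before $\tau$ the local CLT gives $C/\sqrt{\theta}$, and after $\tau$ the reversibility relation $p_u(0,y)=\eps\, p_u(y,0)\leq\eps$, which the paper has already established in the proposition containing \eqref{selfadj} and the measure $\mu^{\eps}$, gives $1/N$ uniformly in $u$; reservoir starting points are handled by permutation symmetry plus reversibility of $Y$ with respect to counting measure (or, equivalently, directly by the lumped bound $p_\theta(0,y)\leq\eps$). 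Your route is shorter, avoids the delicate survival-probability lower bounds entirely, and yields the strictly stronger estimate $C/\sqrt{\theta}+C/N$ in place of $C/\sqrt{\theta}+C/N^{1/3}$; since the lemma is only used to make an integrated error term vanish, the sharper rate is not needed downstream, but it is a real improvement. What the paper's heavier argument buys in exchange is robustness: it does not rely on exact lumpability or on the explicit reversible structure, so it survives perturbations of the reservoir mechanism (e.g.\ $M\neq N$, where your projection onto a birth--death chain on $\{0,\dots,N+1\}$ would no longer reproduce Definition \ref{SRW}).
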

\begin{proof}
By Liggett's inequality \eqref{Lig}, we obtain that
\begin{align*}
&\mathbb{P}_{(x_1,x_2)}(|X_1(\theta)-X_2(\theta)|=1, 1\leq X_1(\theta), X_2(\theta)\leq N)\\
\leq &\dsum_{1\leq y,y+1\leq N}P_{x_1}(Y_1(\theta)\in\{y,y+1\})P_{x_2}(Y_2(\theta)\in\{y,y+1\}).
\end{align*}

To obtain the desired estimate, it suffices to bound $P_x(Y(\theta)=y)$ for $y\in\Lambda_N$. For any $\delta>0$,
\begin{align*}
&P_x(Y(\theta)=y)=P_x(Y(\theta)=y, \tau^Y_{\mathcal{S}_-}\wedge\tau^Y_{\mathcal{S}_+}>\theta)+P_x(Y(\theta)=y, \tau^Y_{\mathcal{S}_-}\wedge\tau^Y_{\mathcal{S}_+}\leq\theta)\\
\leq &P_x(Y^{\rm sp}(\theta)=y)\\
&+\dint_0^{\theta-N^{1-\delta}}P_x(Y(\theta)=y|\sigma=\text{the last time $Y$ touches }{\mathcal{S}_{\pm}})\,dP_x(\sigma)\\
&+\dint^{\theta}_{\theta-N^{1-\delta}}P_x(Y(\theta)=y|\sigma=\text{the first time $Y$ touches } {\mathcal{S}_{\pm}} \text{ after }\theta-N^{1-\delta})\,dP_x(\sigma)\\
&=: \mathbf{A}+\mathbf{B}+\mathbf{C}.
\end{align*}

By choosing $\delta=1/3$, the claim of our lemma follows from the upper bounds of $\mathbf{A},\mathbf{B}$ and $\mathbf{C}$. Namely, by the Local Central Limit Theorem, we have $\mathbf{A}\leq \dfrac{C}{\sqrt{\theta}}$. Moreover, since $Y$ spends at $\mathcal{S}_{\pm}$ in exponential time of parameter $1/(2N)$ then $\mathbf{C}\leq \dfrac{1}{2N^{\delta}}$. On the other hand, $\mathbf{B}$ is equal to
$$\dint_0^{\theta-N^{1-\delta}}P_x(Y(\theta)=y|Y(\sigma^+)\in\{1,N\}; 1\leq Y(v)\leq N, \forall v\in(\sigma,\theta])\,dP_x(\sigma)$$
$$\leq\dint_0^{\theta-N^{1-\delta}}\sum_{z\in\{1,N\}}P_z(Y^{\rm sp}(\theta-\sigma)=y| 1\leq Y^{\rm sp}(v)\leq N, \forall v\in[0,\theta-\sigma])\,dP_x(\sigma)$$
$$=\dint^{\theta}_{N^{1-\delta}}\dsum_{z\in\{1,N\}}\dfrac{P_z(Y^{\rm sp}(\sigma)=y;1\leq Y^{\rm sp}(v)\leq N, \forall v\in[0,\sigma])}{P_z(1\leq Y^{\rm sp}(v)\leq N, \forall v\in[0,\sigma])}\,dP_x(\theta-\sigma).$$

We now would like to look for the upper bound of the fraction in the above integrand for $z=N$. The fraction with $z=1$ can be bounded in a similar way.

To bound the numerator of the fraction from above, we have
\begin{align*}
&P_N(Y^{\rm sp}(\sigma)=y;1\leq Y^{\rm sp}(v)\leq N, \forall v\in[0,\sigma])\\
\leq\,\,& P_N(\tau^{Y^{\rm sp}}_0\wedge\tau^{Y^{\rm sp}}_{N+1}>\sigma)\\
\leq\,\, &P_N(\tau^{Y^{\rm sp}}_{N+1}>\sigma/2)\cdot P_1(\tau^{Y^{\rm sp}}_0>\sigma/2)\leq \dfrac{C}{\sigma}.
\end{align*}

By using the technique presented in \cite{DPTV3}, for $N^{2-\delta}\leq \sigma\leq\theta$, we can show that the denominator of the fraction is bounded from below by 
\begin{align*}
&P_N\big(\{1\leq Y^{\rm sp}(N^2\kappa)\leq N/100; 1\leq Y^{\rm sp}(v)\leq N, \forall v\in[0,N^2\kappa]\}\\
&\,\,\,\,\,\,\,\,\,\,\,\,\,\bigcap_{i=2}^m\{1\leq Y^{\rm sp}(iN^2\kappa)\leq N/100; 1\leq Y^{\rm sp}(v)\leq N, \forall v\in[i-1,i]N^2\kappa\}\big)\\
\geq&\,\,\, \dfrac{C}{N}\text{ (for $\kappa$ small enough, $\sigma=mN^2\kappa,m\in\mathbb{Z}$)}.
\end{align*}

For $N^{1-\delta}\leq \sigma\leq N^{2-\delta}$, the denominator of the fraction can be rewritten by
$$P_N(\tau^{Y^{\rm sp}}_0\wedge\tau^{Y^{\rm sp}}_{N+1}>\sigma)\geq P_N(\tau^{Y^{\rm sp}}_{N+1}>\sigma)-P_N(\tau^{Y^{\rm sp}}_{N+1}>\tau^{Y^{\rm sp}}_0)\geq \dfrac{C}{\sqrt{\sigma}}.$$

It implies that $\mathbf{B}\leq\dfrac{C}{\sqrt{N^{1-\delta}}}$. 
\end{proof}

Lemma \ref{boundsquare} and Lemma \ref{boundprob} lead us to the fact that
\begin{align*}
\dlim_{N\to\infty}\bigg|&\mathbb{E}_{(x_1,x_2)}\mathbf{E}^{*,\eps}\big[\eta(X_1(N^2t),0)\eta(X_2(N^2t),0)\big]\\
&-E_{x_1}\mathbf{E}^{*,\eps}[\eta(Y_1(N^2t),0)]\,E_{x_2}\mathbf{E}^{*,\eps}[\eta(Y_2(N^2t),0)]\bigg|=0.
\end{align*}

Recall the equalities \eqref{dualitynew} and \eqref{duality2}, we arrive at the conclusion of our theorem.

{\bf Acknowledgements.} I greatly appreciate Prof. Errico Presutti for suggesting the problem and offering me a large number of useful ideas. I also would like to express my gratitude to Maria Eulalia Vares, Lorenzo Bertini, Paolo Butta, Pablo Ferrari and Frank Redig for their valuable comments and suggestions.

\bibliographystyle{amsalpha}

\end{document}